\begin{document}

\title{Euler sums of generalized alternating hyperharmonic numbers II}
\author{
Rusen Li
\\
\small School of Mathematics\\
\small Shandong University\\
\small Jinan 250100 China\\
\small \texttt{limanjiashe@163.com}
}

\date{
\small 2020 MR Subject Classifications: 11B37, 11B68, 11M06
}

\maketitle

\def\stf#1#2{\left[#1\atop#2\right]}
\def\sts#1#2{\left\{#1\atop#2\right\}}
\def\e{\mathfrak e}
\def\f{\mathfrak f}

\newtheorem{theorem}{Theorem}
\newtheorem{Prop}{Proposition}
\newtheorem{Cor}{Corollary}
\newtheorem{Lem}{Lemma}
\newtheorem{Example}{Example}
\newtheorem{Remark}{Remark}
\newtheorem{Definition}{Definition}
\newtheorem{Conjecture}{Conjecture}
\newtheorem{Problem}{Problem}

\begin{abstract}
In this paper, we introduce a new type of generalized alternating hyperharmonic numbers $H_n^{(p,r,s_{1},s_{2})}$, and show that Euler sums of the generalized alternating hyperharmonic numbers $H_n^{(p,r,s_{1},s_{2})}$ can be expressed in terms of linear combinations of classical (alternating) Euler sums.
\\
{\bf Keywords:} generalized alternating hyperharmonic numbers, alternating Euler sums, truncated Faulhaber's formula, combinatorial approach
\end{abstract}

\section{Introduction and preliminaries}

The generalized alternating hyperharmonic numbers (of type I) $H_n^{(p,r,1)}$ are introduced by the author \cite{LiRusen} which are defined as
\begin{align*}
H_n^{(p,r,1)}:=\sum_{k=1}^n (-1)^{k-1} H_{k}^{(p,r-1,1)}\quad (n,p,r \in \mathbb N :=\{1,2,3,\cdots\}, H_n^{(p,1,1)}=H_n^{(p)}=\sum_{j=1}^{n}\frac{1}{j^{p}})\,.
\end{align*}
These numbers are alternating analogues of the classical generalized hyperharmonic numbers which are defined by (see {\cite{Dil,omur}})
$$
H_n^{(p,r)}:=\sum_{j=1}^n H_{j}^{(p,r-1)} \quad (n,p,r \in \mathbb N),
$$
where $H_n^{(p,1)}=H_n^{(p)}$ are the classical generalized harmonic numbers. It is obvious that $H_n^{(1,r)}=h_n^{(r)}$ are the classical hyperharmonic numbers {\cite{Benjamin,Conway}}. In particular $H_n^{(1,1)}=H_n$ are the classical harmonic numbers. It is obvious that these harmonic numbers play an essential role in number theory and combinatorics. For some applications in analysis of algorithms and some other areas, please see Knuth's book \cite{Knuth}.

In this paper, as a natural consideration, we introduce a new type of generalized alternating hyperharmonic numbers $H_n^{(p,r,s_{1},s_{2})}$, which are unified extension of the generalized hyperharmonic numbers $H_n^{(p,r)}$ and the generalized alternating hyperharmonic numbers (of type I) $H_n^{(p,r,1)}$. We also study the properties of their Euler sums of type $\sum_{n=1}^\infty H_n^{(p,r,s_{1},s_{2})}/{{n}^{q}}$. We will show that their Euler sums can be expressed in terms of linear combinations of classical (alternating) Euler sums.

In the spirit of Flajolet and Salvy \cite{Flajolet}, we write four types of linear (alternating) Euler sums as
\begin{align*}
S_{p,q}^{+,+}:=\sum_{n=1}^\infty \frac{H_n^{(p)}}{{n}^{q}},
\quad S_{p,q}^{+,-}:=\sum_{n=1}^\infty (-1)^{n-1}\frac{H_n^{(p)}}{{n}^{q}},\\
S_{p,q}^{-,+}:=\sum_{n=1}^\infty \frac{\overline{H}_n^{(p)}}{{n}^{q}},
\quad S_{p,q}^{-,-}:=\sum_{n=1}^\infty (-1)^{n-1}\frac{\overline{H}_n^{(p)}}{{n}^{q}}\,,
\end{align*}
where $\overline{H}_n^{(p)}=\sum_{j=1}^n (-1)^{j-1}/{j^p}$ denotes the classical generalized alternating harmonic numbers. When $p \geq 0$, $H_n^{(-p)}$ and $\overline{H}_n^{(-p)}$ are understood to be the sum
$\sum_{j=1}^{n} j^p$ and $\sum_{j=1}^{n} (-1)^{j-1} j^p$, respectively.

For convenience, we recall the well-known Hurwitz zeta function defined by
$$
\zeta(s,a)=\sum_{n=0}^\infty \frac{1}{(n+a)^{s}} \quad (s\in \mathbb C, \mathfrak Re(s)>1, a>0).
$$
Note that $\mathbb C$ denotes the set of complex numbers and $\mathfrak Re(s)$ denotes the real part of the complex number $s$. When $a=1$, $\zeta(s,1)$ is the famous Riemann zeta function $\zeta(s):=\sum_{n=1}^\infty n^{-s}$.

Euler sums of harmonic numbers have been studied by many famous mathematicians since the time of Euler, for the reason that the classical (alternating) Euler sums are closely related to the Riemann zeta function. A well-known result \cite{Flajolet} that can be traced back to the time of Euler is as the following:
$$
2\sum_{n=1}^\infty \frac{H_n}{n^{m}}=(m+2)\zeta(m+1)-\sum_{n=1}^{m-2}\zeta(m-n)\zeta(n+1), \quad m=2,3,\cdots.
$$

By using numerical computations, Bailey, Borwein and Girgensohn \cite{Bailey} determined,  whether or not a particular infinite sum involving the generalized harmonic numbers $H_n^{(m)}$ could be expressed as a rational linear combination of several given constants.

Since hyperharmonic and generalized (alternating) hyperharmonic numbers are hyper-generalizations of harmonic numbers, their Euler sums  (see \cite{Dil,Flajolet,Kamano,Rusen,LiRusen,Matsuoka,mezo} for example) may have interesting relations with the Riemann zeta function.

The most powerful method for computing Euler sums of harmonic numbers is the contour integral representation approach developed by Flajolet and Salvy \cite{Flajolet}. Due to some parity restrictions, some Euler sums can only be computed in pairs. For hyperharmonic number case,  Mez\H o and Dil \cite{mezo} considered the following infinite sum
$$
\sum_{n=1}^\infty \frac{h_n^{(r)}}{n^{m}} \quad (m\ge r+1, m \in \mathbb N),
$$
and showed that it could be reduced to infinite series involving the Hurwitz zeta function values. Later Dil and Boyadzhiev \cite{Dil1} extended this result to infinite series involving the multiple sums of the Hurwitz zeta function values.

The analytic continuation of the Riemann zeta function has been studied extensively, the reader can find its analytic continuation in any standard textbook of analytic number theory. On the contrary, the analytic continuation of Euler sums of hyperharmonic numbers has also attracted mathematician's interest. If we regard $\sum_{n=1}^\infty h_n^{(r)}/{n^{s}}$ as a complex function in variable $s$, i.e. we regard the hyperharmonic number as a weight in the infinite series, we can obtain more results in this direction. For example, Matsuoka \cite{Matsuoka}proved that $\sum_{n=1}^\infty h_n^{(1)}/{n^{s}}$ admits a meromorphic continuation to the whole complex plane. Kamano \cite{Kamano} proved that the complex variable function $\sum_{n=1}^\infty h_n^{(r)}/{n^{s}}$  could be meromorphically continued to the entire complex plane. In addition, the residue at each pole was also given.

As a natural consideration, Dil, Mez\H o and Cenkci \cite{Dil1} considered the Euler sums of the generalized hyperharmonic numbers
$
\zeta_{H^{(p,r)}}(m):= \sum_{n=1}^\infty H_n^{(p,r)}/{n^{m}}.
$
They proved that for positive integers $p, r$ and $m$ with $m>r$, $\zeta_{H^{(p,r)}}(m)$ could be reduced to infinite series of multiple sums of the Hurwitz zeta function values, which extended Dil-Boyadzhiev's result. For $r=1, 2, 3$, $\zeta_{H^{(p,r)}}(m)$ were also written explicitly by means of (multiple) zeta values. By computing a series of special polynomials, the author \cite{Rusen} obtain a nice formula for the generalized hyperharmonic numbers $H_n^{(p,r)}$. As a corollary, the author proved that $\zeta_{H^{(p,r)}}(m)$ could be expressed as linear combinations of classical Euler sums.

Apart from analytic properties, there are some more interesting combinatorial properties about the generalized hyperharmonic numbers. For instance, \" Om\" ur and Koparal \cite{omur} introduced the generalized hyperharmonic numbers $H_n^{(p,r)}$ from a combinatorial point of view, defined two $n\times n$ matrices $A_n$ and $B_n$ with $a_{i,j}=H_i^{(j,r)}$ and $b_{i,j}=H_i^{(p,j)}$, respectively, and gave some interesting factorizations and determinant properties of the matrices $A_n$ and $B_n$.

Motivated by Flajolet-Salvy's paper \cite{Flajolet} and Dil-Mez\H o-Cenkci's paper \cite{Dil}, the author \cite{LiRusen} introduced the notion of the generalized alternating hyperharmonic numbers
and proved that Euler sums of the generalized alternating hyperharmonic numbers (of type I) $H_n^{(p,r,1)}$ could be expressed in terms of linear combinations of classical (alternating) Euler sums.

Before going further, we introduce the definition of the generalized alternating hyperharmonic numbers $H_n^{(p,r,s_{1},s_{2})}$.
\begin{Definition}
For $p, r, n \in \mathbb N$ and $s_{1},s_{2} \in \mathbb N \cup \{0\}$ with $s_{1}+s_{2} \geq 1$, define the generalized alternating hyperharmonic numbers of type $(p,r,s_{1},s_{2})$ as
\begin{align}\label{deftrnt1}
{\scriptsize
H_n^{(p,r,s_{1},s_{2})}
:=\begin{cases}
\sum_{k=1}^{n} H_{k}^{(p,r-1,s_{1},s_{2})},& \text{$r-1\equiv 1,\cdots,s_{1}\pmod{s_{1}+s_{2}}$};\\
\sum_{k=1}^n (-1)^{k-1} H_{k}^{(p,r-1,s_{1},s_{2})},& \text{$r-1\equiv s_{1}+1,\cdots,s_{1}+s_{2}\pmod{s_{1}+s_{2}}$}.
\end{cases}\,
}
\end{align}
\end{Definition}

It is easy to verify that if $H_n^{(p,1,s_{1},s_{2})}=H_n^{(p)}$, then
\begin{align*}
H_n^{(p,r,s_{1},s_{2})}=\begin{cases}
H_n^{(p,r)},& \text{$s_{1}=1, s_{2}=0$};\\
H_n^{(p,r,1)},& \text{$s_{1}=0, s_{2}=1$}.
\end{cases}\,
\end{align*}

In this paper, we firstly investigate two special cases $H_n^{(p,r,2,1)}$ and $H_n^{(p,r,1,2)}$, and prove that Euler sums of these two numbers could be reduced to linear combinations of classical (alternating) Euler sums. Secondly, by using similar method, we obtain same results for the generalized alternating hyperharmonic numbers $H_n^{(p,r,s_{1},s_{2})}$. In addition, we also present several conjectures on these coefficients.

To be able to reach our goal, we recall truncated Faulhaber's formula on (alternating) sums of powers.

It is well known that the sum of powers of consecutive intergers $1^k+2^k+\cdots+n^k$ can be explicitly expressed in terms of Bernoulli numbers or Bernoulli polynomials. Faulhaber's formula can be written as
\begin{align}
\sum_{\ell=1}^{n}\ell^{k}&=\frac{1}{k+1}\sum_{j=0}^k \binom{k+1}{j}B_j^{+} n^{k+1-j}\label{ber}\\
                       &=\frac{1}{k+1}(B_{k+1}(n+1)-B_{k+1}(1))\quad\hbox{\cite{CFZ}}\,,
\label{ber1}
\end{align}
where Bernoulli numbers $B_n^{+}$ are determined by the recurrence formula
$$
\sum_{j=0}^k\binom{k+1}{j}B_j^{+}=k+1\quad (k\ge 0)
$$
or by the generating function
\begin{align}\label{defbernou}
\frac{t}{1-e^{-t}}=\sum_{n=0}^\infty B_n^{+}\frac{t^n}{n!}\,,
\end{align}
and  Bernoulli polynomials $B_n(x)$ are defined by the following generating function
$$
\frac{te^{xt}}{e^{t}-1}=\sum_{n=0}^\infty B_n(x)\frac{t^n}{n!}\,.
$$

We now provide explicit expressions of truncated Faulhaber's formula on (alternating) sums of powers.
\begin{Lem}{\cite{LiRusen}}\label{prop1}
For $m, n, t \in \mathbb N$ with $t \le n$, one has
\begin{align*}
\sum_{\ell=t}^{n}\ell^{m}=\sum_{j=1}^{m+1}c(m,j) n^j -\sum_{j=1}^{m+1}d(m,j) t^j-e(m,0)\,,
\end{align*}
where
\begin{align*}
&c(m,j)=\frac{1}{m+1} \binom{m+1}{m+1-j}B_{m+1-j}^{+}\quad (1 \le j \le m+1)\,,\notag\\
&d(m,j)=\frac{1}{m+1} \sum_{k=j-1}^{m}\binom{m+1}{m-k}B_{m-k}^{+}\binom{1+k}{j}(-1)^{1+k-j}\quad (1 \le j \le m+1)\,,\notag\\
&e(m,0)=\frac{1}{m+1} \sum_{k=0}^{m}\binom{m+1}{m-k}B_{m-k}^{+}(-1)^{1+k}\,.\notag\\
\end{align*}
\end{Lem}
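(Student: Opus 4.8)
The plan is to derive the formula from the elementary splitting
$\sum_{\ell=t}^{n}\ell^{m}=\sum_{\ell=1}^{n}\ell^{m}-\sum_{\ell=1}^{t-1}\ell^{m}$ and to feed each of the two \emph{complete} sums of powers into Faulhaber's formula \eqref{ber}. Once this is done, the whole content of the lemma reduces to re-indexing binomial coefficients and tracking signs, so I expect no conceptual obstacle, only careful bookkeeping.

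First I would record Faulhaber's formula \eqref{ber} for the full sum and reverse the order of summation. Writing $\sum_{\ell=1}^{n}\ell^{m}=\frac{1}{m+1}\sum_{i=0}^{m}\binom{m+1}{i}B_{i}^{+}n^{m+1-i}$ and substituting $j=m+1-i$ turns this into $\sum_{j=1}^{m+1}c(m,j)n^{j}$ with $c(m,j)=\frac{1}{m+1}\binom{m+1}{m+1-j}B_{m+1-j}^{+}$, which is exactly the stated coefficient. Since $t$ is a positive integer, $t-1\ge 0$, so the same identity applies to the truncated tail and gives $\sum_{\ell=1}^{t-1}\ell^{m}=\sum_{j=1}^{m+1}c(m,j)(t-1)^{j}$ (the empty case $t=1$ being consistent, as every term carries a factor $0^{j}$ with $j\ge 1$).

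The second step is to expand each $(t-1)^{j}$ by the binomial theorem, $(t-1)^{j}=\sum_{a=0}^{j}\binom{j}{a}(-1)^{j-a}t^{a}$, and to interchange the two finite sums so as to collect the total coefficient of each power $t^{a}$. For $a\ge 1$ this coefficient is $\sum_{J\ge a}c(m,J)\binom{J}{a}(-1)^{J-a}$; substituting the closed form of $c(m,J)$ and re-indexing via $k=J-1$ (so that $J$ running from $a$ to $m+1$ becomes $k$ running from $a-1$ to $m$) yields precisely $d(m,a)$ as displayed. The $a=0$ term is the constant part $\sum_{J=1}^{m+1}c(m,J)(-1)^{J}$, and the same substitution $k=J-1$ produces exactly $e(m,0)$.

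Finally I would assemble the pieces:
$\sum_{\ell=t}^{n}\ell^{m}=\sum_{j}c(m,j)n^{j}-\bigl(\sum_{j\ge 1}d(m,j)t^{j}+e(m,0)\bigr)$, which is the claimed identity, the minus sign in front of $e(m,0)$ coming from the subtraction of the tail $\sum_{\ell=1}^{t-1}$. The one point that demands care is keeping the binomial and Bernoulli conventions consistent---in particular using the $B_{1}^{+}=+\tfrac12$ normalization fixed by \eqref{defbernou}, which is what makes \eqref{ber} reproduce the linear term $n^{1}$ correctly---so the only real risk is a sign or index slip in the re-indexing rather than any genuine difficulty.
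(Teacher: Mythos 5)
Your proof is correct: splitting $\sum_{\ell=t}^{n}\ell^{m}=\sum_{\ell=1}^{n}\ell^{m}-\sum_{\ell=1}^{t-1}\ell^{m}$, applying \eqref{ber} to both pieces (valid for upper limit $t-1\ge 0$), expanding $(t-1)^{J}$ binomially, and re-indexing $k=J-1$ reproduces $c(m,j)$, $d(m,j)$ and $e(m,0)$ exactly as stated, with all signs and summation ranges coming out right. Note that the present paper does not prove this lemma at all — it is quoted from \cite{LiRusen} — but the very shape of $d(m,j)$, which is precisely $\sum_{J}c(m,J)\binom{J}{j}(-1)^{J-j}$ after the substitution $k=J-1$, shows that your derivation is the intended one, so your route coincides with the cited source's approach.
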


\begin{Remark}
By considering $t=(1-e^{-t}) \cdot \sum_{n=0}^\infty B_n^{+}\frac{t^n}{n!}$, using Taylor expansion and comparing coefficients on both sides, we get that $e(m,0)=-\delta_{m 0}$, where $\delta_{m n}$ is the Kronecker delta, that is, $\delta_{m m}=1$, $\delta_{m n}=0$ for $m \neq n$.
\end{Remark}

\begin{Lem}{\cite{LiRusen}}\label{prop2}
For $m, n, t \in \mathbb N$ with $t \le n$, one has
\begin{align*}
\sum_{\ell=t}^{n}(-1)^{\ell-1}\ell^{m}
=\sum_{j=0}^{m}c_{1}(m,j)(-1)^{n-1}n^j+\sum_{j=0}^{m}d_{1}(m,j)(-1)^{t-1} t^j\notag\,,
\end{align*}
where
\begin{align*}
&c_{1}(m,j)=\frac{1}{2(m+1)} \sum_{k=0}^{m-j}\binom{m+1}{k}B_{k}^{+}2^{k}\binom{m+1-k}{j}(-1)^{m-k-j}\,,\notag\\
&d_{1}(m,j)=\frac{1}{2(m+1)} \sum_{k=j}^{m}\binom{k}{j}(-1)^{k-j}\sum_{x=0}^{m-k}\binom{m+1}{x}B_{x}^{+}2^{x}\binom{m+1-x}{k}(-1)^{m-x-k}\,.\notag\\
\end{align*}
\end{Lem}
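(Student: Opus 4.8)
The plan is to derive the alternating analogue of truncated Faulhaber's formula (Lemma 2) in exactly the same way the non-alternating version (Lemma 1) is obtained, namely by producing a closed form for the full alternating power sum and then subtracting to get the truncation. The natural starting point is the identity
\begin{align*}
\sum_{\ell=1}^{n}(-1)^{\ell-1}\ell^{m}
=\frac{(-1)^{n-1}}{2}\Bigl(E_{m}(n+1)+E_{m}(n)\Bigr)+\tfrac12 E_{m}(0)\,,
\end{align*}
but since the paper has committed to Bernoulli numbers $B_k^{+}$ rather than Euler polynomials, I would instead stay entirely within the $B_k^{+}$ framework. The cleanest route consistent with the excerpt is to reduce the alternating sum to ordinary power sums via the standard device
\begin{align*}
\sum_{\ell=t}^{n}(-1)^{\ell-1}\ell^{m}
=\sum_{\ell=t}^{n}\ell^{m}-2\sum_{\substack{\ell=t\\ \ell\,\mathrm{even}}}^{n}\ell^{m},
\end{align*}
and then to rewrite the even-indexed sum using $\ell=2u$, so that $\sum_{\ell\,\mathrm{even}}\ell^{m}=2^{m}\sum_{u}u^{m}$, a plain power sum to which Lemma 1 applies. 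This immediately explains the appearance of the factors $2^{k}$ and $2^{x}$ and the prefactor $\tfrac{1}{2(m+1)}$ in the claimed coefficients $c_1$ and $d_1$.

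\textbf{Carrying out the steps.} First I would fix the parity of $n$ and of $t$ and treat the even/odd cases, because the location of the last even index below $n$ depends on the parity of $n$; the goal is to show that the final closed form is parity-free once written in terms of $(-1)^{n-1}n^{j}$ and $(-1)^{t-1}t^{j}$. Second, I would apply Lemma 1 to each ordinary power sum appearing after the $\ell=2u$ substitution, collecting the coefficients $c(m,j)$ and $d(m,j)$ together with the $2^{m}$ scaling and the binomial expansion of $(2u)^{m}$ or of the shifted summation limits. Third, I would expand every quantity of the form $(n+1)^{a}$ or $(\tfrac{n}{2})^{a}$ via the binomial theorem and regroup by powers of $n$ (respectively $t$), so that the coefficient of $(-1)^{n-1}n^{j}$ becomes a single sum over an auxiliary index; matching this against the definition of $B_k^{+}$ through \eqref{defbernou} is what produces the stated double-indexed expressions for $c_1(m,j)$ and $d_1(m,j)$. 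Finally, the constant term must cancel: the analogue of $e(m,0)$ should vanish identically, which I would verify using the Remark's identity $e(m,0)=-\delta_{m0}$ applied to the even-index subsum, confirming that no standalone constant survives in the alternating formula.

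\textbf{The main obstacle} I expect is the bookkeeping of parity and of the shifted upper limit of the even-indexed sum: depending on whether $n$ is even or odd, the top even integer is $n$ or $n-1$, and similarly the bottom even integer depends on the parity of $t$, so a priori one gets four cases. The crux is to show that all four cases collapse to the single formula stated, i.e. that the parity-dependent boundary corrections reassemble precisely into the factors $(-1)^{n-1}$ and $(-1)^{t-1}$ multiplying clean polynomials in $n$ and $t$. Once this collapse is established, the remainder is a routine rearrangement of binomial sums, and the precise forms of $c_1(m,j)$ and $d_1(m,j)$ follow by comparing coefficients of $n^{j}$ and $t^{j}$ and invoking the defining recurrence for $B_k^{+}$; because the statement is cited from \cite{LiRusen}, I would present this as a verification rather than a first-principles derivation, emphasizing the $\ell=2u$ reduction as the conceptual heart of the argument.
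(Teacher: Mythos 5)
A preliminary remark: this paper never proves Lemma \ref{prop2} --- it is imported verbatim from \cite{LiRusen} --- so there is no in-paper argument to compare yours against, and what follows judges your plan on its own merits. Your reduction is sound and, when pushed through, it does produce exactly the stated coefficients: writing $(-1)^{\ell-1}=1-2\cdot[\ell\ \text{even}]$ and substituting $\ell=2u$, the $(-1)^{n-1}$-part of the result comes out as $(-1)^{n-1}\sum_{j'}c(m,j')\,2^{m-j'}\bigl(n^{j'}-(n-1)^{j'}\bigr)$ with $c(m,j')$ the coefficients of Lemma \ref{prop1}; extracting the coefficient of $n^j$ and reindexing by $k=m+1-j'$ gives $\frac{1}{2(m+1)}\sum_{k=0}^{m-j}\binom{m+1}{k}B_k^{+}2^{k}\binom{m+1-k}{j}(-1)^{m-k-j}$, which is precisely $c_1(m,j)$ (the $t$-side analogously recovers $d_1(m,j)$). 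So your explanation of the factors $2^k$ and of the prefactor $\frac{1}{2(m+1)}$ is correct, and the ``routine rearrangement'' you defer genuinely closes.

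The genuine gap is the step you yourself call the crux and then only ``expect'' to hold: the cancellation of all parity-free terms. Concretely, after applying Lemma \ref{prop1} to both power sums, the pure polynomial part in $n$ equals $P(n)-2^{m}\bigl(P(n/2)+P((n-1)/2)\bigr)$, where $P$ is the Faulhaber polynomial; that this is a \emph{constant} is not bookkeeping but an identity equivalent to the duplication (Raabe) formula $B_{m+1}(2x)=2^{m}\bigl(B_{m+1}(x)+B_{m+1}(x+\frac{1}{2})\bigr)$, and one must further check that this constant, its $t$-side analogue, and the $e(m,0)$ contributions sum to zero (they do, but it must be shown). Alternatively one can avoid Raabe entirely: writing the candidate decomposition as $S(n,t)=Q(n)+(-1)^{n-1}R(n)+\cdots$, the recurrence $S(n+1,t)-S(n,t)=(-1)^{n}(n+1)^{m}$ forces $Q(n+1)-Q(n)=(-1)^{n}\bigl((n+1)^{m}-R(n+1)-R(n)\bigr)$, and a polynomial identity $P_1(n)=(-1)^{n}P_2(n)$ valid for all integers $n$ forces $P_1=P_2=0$, hence $Q$ is constant. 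Neither argument, nor any substitute for it, appears in your outline, so as written the proof is incomplete at its central point. A further small hole: when $t=n$ with $n$ odd, the even-indexed subsum is empty and its limits $\lceil t/2\rceil>\lfloor n/2\rfloor$ fall outside the hypothesis of Lemma \ref{prop1}, so this degenerate case needs a separate (one-line) verification.
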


\begin{Remark}
It seems that the following identities hold:
\begin{align*}
&c(m,j)=(-1)^{m+1-j} d(m,j)\quad (1 \le j \le m+1)\,,\notag\\
&c_{1}(m,j)=(-1)^{m-j} d_{1}(m,j)\quad (0 \le j \le m)\,.\notag\\
\end{align*}
Since these two identities are not essential in the proof of our main theorem, we leave it as a problem to the reader.
\end{Remark}

\section{Euler sums of generalized alternating hyperharmonic numbers $H_n^{(p,r,2,1)}$ }

In this section, we prove that Euler sums of the generalized alternating hyperharmonic numbers $H_n^{(p,r,2,1)}$ can be expressed in terms of linear combinations of classical (alternating) Euler sums. In order to reach our goal, we introduce some notations.

\begin{Definition}\label{deftrnt4}
For $r, n, t \in \mathbb N$, define
\begin{align*}
T(r,2,1,n,t)=\begin{cases}
\sum_{k=t}^{n} T(r-1,2,1,k,t),& \text{$r-1\equiv 1,2\pmod{3}$};\\
\sum_{k=t}^n (-1)^{k-1} T(r-1,2,1,k,t),& \text{$r-1\equiv 3\pmod{3}$}.
\end{cases}
\end{align*}
\end{Definition}

\begin{Definition}\label{deftrnt5}
For $r \in \mathbb N$, define
\begin{align*}
f(r,2,1)=\begin{cases}
f(r-1,2,1)+1,& \text{$r-1\equiv 1,2,6\pmod{6}$};\\
f(r-1,2,1),& \text{$r-1\equiv 3,4,5\pmod{6}$}.
\end{cases}
\end{align*}
\end{Definition}

By direct computation, we have
\begin{align*}
&T(1,2,1,n,t)= 1\,,\\
&T(2,2,1,n,t)=n-t+1\,,\\
&T(3,2,1,n,t)=\frac{1}{2}n^{2}+(-t+\frac{3}{2})n+\frac{1}{2}t^{2}-\frac{3}{2}t+1\,,\\
&T(4,2,1,n,t)=\frac{1}{4}(-1)^{n-1}n^{2}+(-\frac{1}{2}t+1)(-1)^{n-1}n+\frac{1}{4}(-1)^{n-1}t^{2}\\
&\qquad \qquad \qquad \quad -(-1)^{n-1}t+\frac{7}{8}(-1)^{n-1}+\frac{1}{8}(-1)^{t-1}\,,\\
&T(5,2,1,n,t)=\frac{1}{8}(-1)^{n-1}n^{2}+\bigl(-\frac{1}{4}(-1)^{n-1}t+\frac{5}{8}(-1)^{n-1}+\frac{1}{8}(-1)^{t-1}\bigr)n\\ &\qquad \qquad \qquad \quad+\frac{1}{8}(-1)^{n-1}t^{2}-\bigl(\frac{5}{8}(-1)^{n-1}+\frac{1}{8}(-1)^{t-1}\bigr)t+\frac{11}{16}(-1)^{n-1}\\
&\qquad \qquad \qquad  \quad+\frac{5}{16}(-1)^{t-1}\,,\\
&T(6,2,1,n,t)=\frac{1}{16}\bigl((-1)^{n-1}+(-1)^{t-1}\bigr)(n^{2}+t^{2})-\frac{1}{8}\bigl((-1)^{n-1}+(-1)^{t-1}\bigr)nt\\
&\qquad \quad \quad \quad \quad \quad +\frac{3}{8}\bigl((-1)^{n-1}+(-1)^{t-1}\bigr)(n-t)+\frac{1}{2}\bigl((-1)^{n-1}+(-1)^{t-1}\bigr)\,,\\
&T(7,2,1,n,t)=\frac{1}{48}(n^{3}-t^{3})+\bigl(\frac{7}{32}+\frac{1}{32}(-1)^{n-1+t-1}\bigr)(n^{2}+t^{2})-\frac{1}{16}n^{2}t\\
&\qquad \qquad \qquad  \quad +\frac{1}{16}nt^{2}-nt\bigl(\frac{7}{16}+\frac{1}{16}(-1)^{n-1+t-1}\bigr)\\
&\qquad \qquad \qquad  \quad +\bigl(\frac{67}{96}+\frac{7}{32}(-1)^{n-1+t-1}\bigr)(n-t)+\frac{21}{32}+\frac{11}{32}(-1)^{n-1+t-1}\,,\\
&T(8,2,1,n,t)=\frac{1}{192}n^{4}+n^{3}\bigl(-\frac{1}{48}t+\frac{1}{12}\bigr)
+n^{2}\bigl(\frac{1}{32}t^{2}-\frac{1}{4}t+\frac{89}{192}\bigr)\\
&\qquad \qquad \qquad \quad +\frac{1}{64}(-1)^{n-1+t-1}n^{2} +n\bigl(-\frac{1}{48}t^{3}+\frac{1}{4}t^{2}-\frac{89}{96}t\bigr)\\
&\qquad \qquad \qquad \quad +n\bigl(-\frac{1}{32}(-1)^{n-1+t-1}t+\frac{25}{24}+\frac{1}{8}(-1)^{n-1+t-1}\bigr)\\
&\qquad \qquad \qquad \quad +\frac{1}{192}t^{4}-\frac{1}{12}t^{3}+\bigl(\frac{89}{192}+\frac{1}{64}(-1)^{n-1+t-1}\bigr)t^{2}
-\frac{25}{24}t\\
&\qquad \qquad \qquad \quad -\frac{1}{8}(-1)^{n-1+t-1}t+\frac{99}{128}+\frac{29}{128}(-1)^{n-1+t-1}\,.
\end{align*}

\begin{align*}
&f(1,2,1)=0,\quad f(2,2,1)=1,\quad f(3,2,1)=2,\quad f(4,2,1)=2,\\
&f(5,2,1)=2,\quad f(6,2,1)=2,\quad f(7,2,1)=3,\quad f(8,2,1)=4\,.
\end{align*}

Observing the above facts, we may assume
\begin{align*}
T(r,2,1,n,t)
&=\sum_{m=0}^{f(r,2,1)}\sum_{j=0}^{f(r,2,1)-m}\biggl(b(r,2,1,m,j,0)+b(r,2,1,m,j,1)(-1)^{n-1+t-1}\\
&\qquad+b(r,2,1,m,j,2)(-1)^{n-1}+b(r,2,1,m,j,3)(-1)^{t-1}\biggr)t^{j} n^{m}\,,
\end{align*}
with
\begin{align*}
b(r,2,1,m,j,k)=0\quad (m+j\geq f(r,2,1)+1, m\geq 0, j\geq 0, k=0,1,2,3)\,.
\end{align*}
We now prove the above assumption by mathematical induction.
\begin{Lem}\label{lemma1}
For $r, n, t \in \mathbb N$, we have
\begin{align*}
T(r,2,1,n,t)
&=\sum_{m=0}^{f(r,2,1)}\sum_{j=0}^{f(r,2,1)-m}\biggl(b(r,2,1,m,j,0)+b(r,2,1,m,j,1)(-1)^{n-1+t-1}\\
&\qquad+b(r,2,1,m,j,2)(-1)^{n-1}+b(r,2,1,m,j,3)(-1)^{t-1}\biggr)t^{j} n^{m}\,,
\end{align*}

where
\begin{align*}
&b(r,2,1,m,j,2)=b(r,2,1,m,j,3)=0\quad \bigl(r\equiv 1,2,3\pmod{6}\bigr)\,,\\
&b(r,2,1,m,j,0)=b(r,2,1,m,j,1)=0\quad \bigl(r\equiv 4,5,6\pmod{6}\bigr)\,,\\
&b(r,2,1,m,j,1)=0 \quad \bigl(r\equiv 1\pmod{6},\quad m,j\geq 0,\quad m+j\geq f(r,2,1)\bigr)\,,\\
&b(r,2,1,m,j,1)=0 \quad \bigl(r\equiv 2\pmod{6},\quad m,j\geq 0,\quad m+j\geq f(r,2,1)-1\bigr)\,,\\
&b(r,2,1,m,j,1)=0 \quad \bigl(r\equiv 3\pmod{6},\quad m,j\geq 0,\quad m+j\geq f(r,2,1)-2\bigr)\,,\\
&b(r,2,1,m,j,3)=0 \quad \bigl(r\equiv 4\pmod{6},\quad m,j\geq 0,\quad m+j\geq f(r,2,1)-1\bigr)\,,\\
&b(r,2,1,m,j,3)=0 \quad \bigl(r\equiv 5\pmod{6},\quad m,j\geq 0,\quad m+j\geq f(r,2,1)\bigr)\,,\\
&\sum_{m=0}^{f(r,2,1)}b(r,2,1,m,f(r,2,1)-m,0)=0 \quad \bigl(r\equiv 2,3\pmod{6}\bigr)\,,\\
&\sum_{m=0}^{f(r,2,1)-1}b(r,2,1,m,f(r,2,1)-1-m,0)=0 \quad \bigl(r\equiv 3\pmod{6}\bigr)\,,\\
\end{align*}
and for $k=0,1,2,3$, $b(r,2,1,m,j,k)$ satisfy the following recurrence relations:

Case $1$, if $r\equiv 1,2\pmod{6}$, then we have $f(r+1,2,1)=f(r,2,1)+1$,
\begin{align*}
&b(r+1,2,1,m,j,0)=\sum_{\ell=m-1}^{f(r,2,1)}b(r,2,1,\ell,j,0) c(\ell,m)\,,\\
&(1 \le m \le f(r,2,1)+1,\quad 0 \le j \le f(r,2,1)+1-m)\,;\\
&\quad b(r+1,2,1,0,j,0)\\
&=-\sum_{m=0}^{f(r,2,1)}\sum_{\substack{j_{1}+\ell=j\\ 0 \le j_{1} \le f(r,2,1)-m\\ 1 \le \ell \le m+1 }}b(r,2,1,m,j_{1},0) d(m,\ell)+b(r,2,1,0,j,0)\\
&\quad +\sum_{m=0}^{f(r,2,1)}\sum_{\substack{j_{1}+\ell=j\\ 0 \le j_{1} \le f(r,2,1)-m\\ 0 \le \ell \le m }}b(r,2,1,m,j_{1},1) d_{1}(m,\ell)\quad(0 \le j \le f(r,2,1)+1)\,;\\
&b(r+1,2,1,m,j,1)=\sum_{\ell=m}^{f(r,2,1)}b(r,2,1,\ell,j,1) c_{1}(\ell,m)\,,\\
&(0 \le m \le f(r,2,1),\quad 0 \le j \le f(r,2,1)-m)\,.
\end{align*}

Case $2$, if $r\equiv 3\pmod{6}$, then we have $f(r+1,2,1)=f(r,2,1)$,
\begin{align*}
&b(r+1,2,1,m,j,2)=\sum_{\ell=m}^{f(r,2,1)}b(r,2,1,\ell,j,0) c_{1}(\ell,m)\,,\\
&(0 \le m \le f(r,2,1),\quad 0 \le j \le f(r,2,1)-m)\,;\\
&b(r+1,2,1,m,j,3)=\sum_{\ell=m-1}^{f(r,2,1)}b(r,2,1,\ell,j,1) c(\ell,m)\,,\\
&(1 \le m \le f(r,2,1)+1,\quad 0 \le j \le f(r,2,1)+1-m)\,;\\
&\quad b(r+1,2,1,0,j,3)\\
&=\sum_{m=0}^{f(r,2,1)}\sum_{\substack{j_{1}+\ell=j\\ 0 \le j_{1} \le f(r,2,1)-m\\ 0 \le \ell \le m}}b(r,2,1,m,j_{1},0) d_{1}(m,\ell)+b(r,2,1,0,j,1)\\
&\quad -\sum_{m=0}^{f(r,2,1)}\sum_{\substack{j_{1}+\ell=j\\ 0 \le j_{1} \le f(r,2,1)-m\\ 1 \le \ell \le m+1 }}b(r,2,1,m,j_{1},1) d(m,\ell)\quad (0 \le j \le f(r,2,1)+1)\,.
\end{align*}

Case $3$, if $r\equiv 4,5\pmod{6}$, then we have $f(r+1,2,1)=f(r,2,1)$,
\begin{align*}
&b(r+1,2,1,m,j,2)=\sum_{\ell=m}^{f(r,2,1)}b(r,2,1,\ell,j,2) c_{1}(\ell,m)\,,\\
&(0 \le m \le f(r,2,1),\quad 0 \le j \le f(r,2,1)-m)\,;\\
&b(r+1,2,1,m,j,3)=\sum_{\ell=m-1}^{f(r,2,1)}b(r,2,1,\ell,j,3) c(\ell,m)\,,\\
&(1 \le m \le f(r,2,1)+1,\quad 0 \le j \le f(r,2,1)+1-m)\,;\\
&\quad b(r+1,2,1,0,j,3)\\
&=\sum_{m=0}^{f(r,2,1)}\sum_{\substack{j_{1}+\ell=j\\ 0 \le j_{1} \le f(r,2,1)-m\\ 0 \le \ell \le m}}b(r,2,1,m,j_{1},2) d_{1}(m,\ell)+b(r,2,1,0,j,3)\\
&\quad -\sum_{m=0}^{f(r,2,1)}\sum_{\substack{j_{1}+\ell=j\\ 0 \le j_{1} \le f(r,2,1)-m\\ 1 \le \ell \le m+1 }}b(r,2,1,m,j_{1},3) d(m,\ell)\quad (0 \le j \le f(r,2,1)+1)\,.
\end{align*}

Case $4$, if $r\equiv 6\pmod{6}$, then we have $f(r+1,2,1)=f(r,2,1)+1$,
\begin{align*}
&b(r+1,2,1,m,j,0)=\sum_{\ell=m-1}^{f(r,2,1)}b(r,2,1,\ell,j,2) c(\ell,m)\,,\\
&(1 \le m \le f(r,2,1)+1,\quad 0 \le j \le f(r,2,1)+1-m)\,;\\
&\quad b(r+1,2,1,0,j,0)\\
&=-\sum_{m=0}^{f(r,2,1)}\sum_{\substack{j_{1}+\ell=j\\ 0 \le j_{1} \le f(r,2,1)-m\\ 1 \le \ell \le m+1 }}b(r,2,1,m,j_{1},2) d(m,\ell)+b(r,2,1,0,j,2)\\
&\quad +\sum_{m=0}^{f(r,2,1)}\sum_{\substack{j_{1}+\ell=j\\ 0 \le j_{1} \le f(r,2,1)-m\\ 0 \le \ell \le m }}b(r,2,1,m,j_{1},3) d_{1}(m,\ell)\quad (0 \le j \le f(r,2,1)+1)\,;\\
&b(r+1,2,1,m,j,1)=\sum_{\ell=m}^{f(r,2,1)}b(r,2,1,\ell,j,3) c_{1}(\ell,m)\,,\\
&(0 \le m \le f(r,2,1),\quad 0 \le j \le f(r,2,1)-m)\,.
\end{align*}
The initial value is given by $T(1,2,1,n,t)= 1$.
\end{Lem}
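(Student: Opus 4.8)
The plan is to argue by strong induction on $r$, treating the entire statement---the claimed expansion, the degree bound carried by $f(r,2,1)$, all the vanishing and summation constraints, and the four families of recurrences---as one combined induction hypothesis. The base case $r=1$ is immediate: since $T(1,2,1,n,t)=1$ and $f(1,2,1)=0$, the only nonzero coefficient is $b(1,2,1,0,0,0)=1$, and every constraint holds trivially. For the inductive step, note that Definition~\ref{deftrnt4} turns $T(r+1,2,1,n,t)$ into an ordinary inner sum $\sum_{k=t}^{n}$ when $r\equiv 1,2\pmod 3$ and into an alternating inner sum $\sum_{k=t}^{n}(-1)^{k-1}$ when $r\equiv 0\pmod 3$, while Definition~\ref{deftrnt5} shows that $f(\cdot,2,1)$ grows by one (in the step $r\to r+1$) exactly when $r\equiv 1,2,0\pmod 6$. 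Refining the residue mod $3$ to a residue mod $6$ produces precisely the four cases of the statement, and accounts for why the surviving sign-types alternate between $\{0,1\}$ (for $r+1\equiv 1,2,3$) and $\{2,3\}$ (for $r+1\equiv 4,5,0$).

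In each case I would substitute the inductive expansion of $T(r,2,1,k,t)$, with $k$ playing the role of $n$, interchange the order of summation, and evaluate the inner power sums by Lemma~\ref{prop1} for ordinary powers and by Lemma~\ref{prop2} for alternating powers. The decisive bookkeeping is the \emph{total} sign carried by $k^{m}$: the outer sign ($+1$ or $(-1)^{k-1}$) multiplies the inner sign-type of each coefficient, and their product simultaneously selects which truncated Faulhaber formula applies and fixes the sign-type of the output. A trivial total sign sends $\sum_{k=t}^{n}k^{m}$ through Lemma~\ref{prop1}, yielding unsigned $n$-powers, unsigned $t$-powers, and a constant; a total sign $(-1)^{k-1}$ sends the sum through Lemma~\ref{prop2}, yielding $(-1)^{n-1}n^{i}$ and $(-1)^{t-1}t^{i}$ terms. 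Tracking this product in all four cases reproduces the assertion that the nonzero sign-types at step $r+1$ are exactly those claimed.

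Reading off the recurrences is then a matter of collecting, within each sign-type, the coefficient of $t^{j}n^{m}$. The pure $n$-power part ($m\ge 1$) is produced solely by the leading factors $c(\ell,m)$ or $c_{1}(\ell,m)$; since the definition of $c(\ell,m)$ requires $\ell\ge m-1$ and that of $c_{1}(\ell,m)$ requires $\ell\ge m$, the single-index recurrences for $b(r+1,2,1,m,j,\cdot)$ with $m\ge 1$ follow with the stated $\ell$-ranges. The $m=0$ part collects the $t$-side contributions $d(m,\ell)$ and $d_{1}(m,\ell)$ together with the constant $e(m,0)=-\delta_{m0}$ from the Remark after Lemma~\ref{prop1}; convolving the old $t$-exponent $j_{1}$ with the Faulhaber exponent $\ell$ yields exactly the double sums over $j_{1}+\ell=j$, and attaching the correct signs in each case reproduces the displayed formulas for $b(r+1,2,1,0,j,\cdot)$.

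The hard part will be showing that the degree bound and the summation constraints propagate. In Cases~1 and~4 the ordinary inner sum (Lemma~\ref{prop1}) raises an $n$-power from $m$ to $m+1$, which forces $f(r+1,2,1)=f(r,2,1)+1$; one checks that no term of total degree $f(r,2,1)+2$ can appear because the $t$-exponent satisfies $j\le f(r,2,1)-m$. In Cases~2 and~3, by contrast, Lemma~\ref{prop1} is applied only to the sign-types ($k=1$, resp.\ $k=3$) whose coefficients are, by the inductive vanishing hypotheses, already zero for large $m+j$; these vanishing hypotheses are exactly what keeps the degree from growing, so that $f(r+1,2,1)=f(r,2,1)$. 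The genuinely delicate point is that certain top-degree $m=0$ coefficients produced in these cases do not vanish termwise but must cancel: using the special Faulhaber values $c_{1}(m,m)=d_{1}(m,m)=\tfrac12$ (which come from $B_{0}^{+}=1$), the offending top coefficient reduces to $\tfrac12\sum_{m}b(r,2,1,m,f(r,2,1)-m,k)$, which is annihilated precisely by the inductive summation constraint $\sum_{m}b(r,2,1,m,f(r,2,1)-m,0)=0$ (and, one degree lower, by its next-to-leading analogue). To re-establish these summation constraints at the next step, I would evaluate the defining recursion at $n=t$: Definition~\ref{deftrnt4} gives $T(r+1,2,1,t,t)=T(r,2,1,t,t)$ in the ordinary case and $T(r+1,2,1,t,t)=(-1)^{t-1}T(r,2,1,t,t)$ in the alternating case, and since the right-hand side has strictly smaller degree (again by the inductive constraints), the top homogeneous part of the left-hand side must vanish at $n=t$---which is exactly the required cancellation of leading coefficients. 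Carrying these degree and cancellation checks coherently through all six residue classes simultaneously is the crux of the induction.
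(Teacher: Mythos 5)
Your plan follows the paper's proof in all essentials: the same strong induction carrying the expansion, the degree bound, the vanishing conditions and the summation constraints as one package; the same splitting of $T(r+1,2,1,n,t)$ by sign-type with the inner sums evaluated by Lemmas \ref{prop1} and \ref{prop2}; and the same extraction of the four cases of recurrences by comparing coefficients of $t^{j}n^{m}$. The one point where you genuinely depart from the paper is the re-establishment of the two summation constraints: the paper proves these by explicit telescoping, using $c(m-1,m)=\tfrac1m$ and $d(m,m+1)=\tfrac1{m+1}$ so that the top-degree coefficients at level $r+1$ are $\tfrac1m b(r,2,1,m-1,\cdot,0)$ for $m\ge 1$ and $-\sum_{m}\tfrac{1}{m+1}b(r,2,1,m,\cdot,0)$ for $m=0$, which cancel after reindexing; you instead evaluate the defining recursion at $n=t$ and compare degrees. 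Your device is sound (the representation of a function of $t$ as $P(t)+Q(t)(-1)^{t-1}$ with $P,Q$ polynomials is unique, and the type-1 vanishing needed to isolate the type-0 top coefficients is available from the recurrences before this step), and it is arguably cleaner than the paper's computation.

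There is, however, a concrete gap exactly at the point you yourself call the crux: the $m=0$ vanishing checks in your Cases 2 and 3. You claim the offending coefficients reduce to $\tfrac12\sum_{m}b(r,2,1,m,f(r,2,1)-m,k)$ and are killed by the inductive summation constraints. That is true only for the topmost coefficient in Case 2. One degree lower, the correct reduction (the paper's third identity in its Step 3) is $b(r+1,2,1,0,f(r,2,1)-1,3)=\tfrac12\sum_{m}b(r,2,1,m,f(r,2,1)-1-m,0)-\tfrac14\sum_{m}m\,b(r,2,1,m,f(r,2,1)-m,0)$, because $d_{1}(m,m-1)=-\tfrac m4$ also contributes; the weighted sum is not implied by the unweighted constraint (zero sum does not force zero weighted sum), and your $n=t$ evaluation cannot see it either, since setting $n=t$ only controls sums of coefficients along $m+j=\mathrm{const}$, never individual $m=0$ entries. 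Likewise in Case 3 the leading $m=0$ coefficient reduces to $\tfrac12\sum_{m}b(r,2,1,m,f(r,2,1)-m,2)$, and your induction hypothesis contains no summation constraint on type-2 coefficients at all. The paper closes both holes by unfolding the recurrences one more level: for $r\equiv 3\pmod 6$ it uses $b(r,2,1,m,f(r,2,1)-m,0)=\tfrac1m b(r-1,2,1,m-1,f(r,2,1)-m,0)$, and for $r\equiv 4\pmod 6$ it uses $b(r,2,1,m,f(r,2,1)-m,2)=\tfrac12 b(r-1,2,1,m,f(r-1,2,1)-m,0)$ (from $c_{1}(m,m)=\tfrac12$), so that both problematic sums telescope to $\sum_{m'}b(r-1,2,1,m',f(r-1,2,1)-m',0)=0$ by the summation constraint at level $r-1$. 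Your proposal, as written, lacks this two-level ingredient (equivalently, the weighted-sum identity), and the induction stalls precisely there; once this is added, the rest of your outline goes through.
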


\begin{proof}
The proof is done by induction on r. When $r=1$, the identity is
clear. Assume, then, the  identity has been proved for $1, 2, \cdots, r$. From the definition \ref{deftrnt4} of $T(r,2,1,n,t)$, we have

\begin{align*}
&\qquad T(r+1,2,1,n,t)\notag\\
&=\sum_{k_{1}=t}^{n} T_{1}(r,k_{1},t) \notag\\
&=\sum_{k_{1}=t}^{n} \sum_{m=0}^{f(r,2,1)}\sum_{j=0}^{f(r,2,1)-m} \biggl(b(r,2,1,m,j,0)+b(r,2,1,m,j,1)(-1)^{k_{1}-1+t-1}\notag\\
&\qquad+b(r,2,1,m,j,2)(-1)^{k_{1}-1}+b(r,2,1,m,j,3)(-1)^{t-1}\biggr)t^{j} k_{1}^{m}\notag\\
&=S_{1}+S_{2}+S_{3}+S_{4}\,, \notag
\end{align*}
where
\begin{align*}
&S_{1}=\sum_{m=0}^{f(r,2,1)}\sum_{j=0}^{f(r,2,1)-m}b(r,2,1,m,j,0)
t^{j}\sum_{k_{1}=t}^{n}k_{1}^{m}\,,\\
&S_{2}=\sum_{m=0}^{f(r,2,1)}\sum_{j=0}^{f(r,2,1)-m}b(r,2,1,m,j,1)
(-1)^{t-1}t^{j}\sum_{k_{1}=t}^{n}(-1)^{k_{1}-1}k_{1}^{m}\,,\\
&S_{3}=\sum_{m=0}^{f(r,2,1)}\sum_{j=0}^{f(r,2,1)-m}b(r,2,1,m,j,2)
t^{j}\sum_{k_{1}=t}^{n}(-1)^{k_{1}-1}k_{1}^{m}\,,\\
&S_{4}=\sum_{m=0}^{f(r,2,1)}\sum_{j=0}^{f(r,2,1)-m}b(r,2,1,m,j,1)
(-1)^{t-1}t^{j}\sum_{k_{1}=t}^{n}k_{1}^{m}\,,
\end{align*}
and
\begin{align*}
&\qquad T(r+1,2,1,n,t)\notag\\
&=\sum_{k_{1}=t}^{n}(-1)^{k_{1}-1}T_{1}(r,k_{1},t) \notag\\
&=\sum_{k_{1}=t}^{n}(-1)^{k_{1}-1} \sum_{m=0}^{f(r,2,1)}\sum_{j=0}^{f(r,2,1)-m} \biggl(b(r,2,1,m,j,0)+b(r,2,1,m,j,1)(-1)^{k_{1}-1+t-1}\notag\\
&\qquad+b(r,2,1,m,j,2)(-1)^{k_{1}-1}+b(r,2,1,m,j,3)(-1)^{t-1}\biggr)t^{j} k_{1}^{m}\notag\\
&=S_{5}+S_{6}+S_{7}+S_{8}\,, \notag
\end{align*}
where
\begin{align*}
&S_{5}=\sum_{m=0}^{f(r,2,1)}\sum_{j=0}^{f(r,2,1)-m}b(r,2,1,m,j,0)
t^{j}\sum_{k_{1}=t}^{n}k_{1}^{m}\,,\\
&S_{6}=\sum_{m=0}^{f(r,2,1)}\sum_{j=0}^{f(r,2,1)-m}b(r,2,1,m,j,1)
(-1)^{t-1}t^{j}\sum_{k_{1}=t}^{n}(-1)^{k_{1}-1}k_{1}^{m}\,,\\
&S_{7}=\sum_{m=0}^{f(r,2,1)}\sum_{j=0}^{f(r,2,1)-m}b(r,2,1,m,j,2)
t^{j}\sum_{k_{1}=t}^{n}(-1)^{k_{1}-1}k_{1}^{m}\,,\\
&S_{8}=\sum_{m=0}^{f(r,2,1)}\sum_{j=0}^{f(r,2,1)-m}b(r,2,1,m,j,1)
(-1)^{t-1}t^{j}\sum_{k_{1}=t}^{n}k_{1}^{m}\,.
\end{align*}

Now we prove the above recurrence relations in $6$ steps.

Step $1$: If $r\equiv 1\pmod{6}$, then $f(r+1,2,1)=f(r,2,1)+1$. From the definition \ref{deftrnt4}, we have
$$
T(r+1,2,1,n,t)=S_{1}+S_{2}.
$$
With the help of Lemmas \ref{prop1} and \ref{prop2}, comparing the coefficients of $t^{j}n^m$ gives the recurrence relations stated in Case $1$. The only facts that need to be verified are
$$
b(r+1,2,1,m,j,1)=0 \quad \bigl(r+1\equiv 2\pmod{6},\quad m,j\geq 0, \quad m+j\geq f(r,2,1)\bigr)
$$
and
$$
\sum_{m=0}^{f(r+1,2,1)}b(r+1,2,1,m,f(r+1,2,1)-m,0)=0 \quad \bigl(r+1\equiv 2\pmod{6}\bigr)\,.
$$

By the induction hypothesis, we have
$$
b(r,2,1,m,j,1)=0 \quad \bigl(r\equiv 1\pmod{6},\quad m,j\geq 0,\quad m+j\geq f(r,2,1)\bigr).
$$
Since
$$
b(r+1,2,1,m,j,1)=\sum_{\ell=m}^{f(r,2,1)}b(r,2,1,\ell,j,1) c_{1}(\ell,m),
$$
then we get that
$$
b(r+1,2,1,m,j,1)=0 \quad \bigl(r+1\equiv 2\pmod{6},\quad m,j\geq 0,\quad m+j\geq f(r,2,1)\bigr)\,.
$$

Since
\begin{align*}
&b(r+1,2,1,m,f(r,2,1)+1-m,0)=\frac{1}{m} b(r,2,1,m-1,f(r,2,1)+1-m,0)\\
&(1 \le m \le f(r,2,1)+1)\,
\end{align*}
and
$$
b(r+1,2,1,0,f(r,2,1)+1,0)=-\frac{1}{m+1}\sum_{m=0}^{f(r,2,1)}b(r,2,1,m,f(r,2,1)-m,0),
$$
then we have
$$
\sum_{m=0}^{f(r+1,2,1)}b(r+1,2,1,m,f(r+1,2,1)-m,0)=0 \quad \bigl(r+1\equiv 2\pmod{6}\bigr)\,.
$$

Step $2$: If $r\equiv 2\pmod{6}$, then $f(r+1,2,1)=f(r,2,1)+1$. From the definition \ref{deftrnt4}, we have
$$
T(r+1,2,1,n,t)=S_{1}+S_{2}.
$$
With the help of Lemmas \ref{prop1} and \ref{prop2}, comparing the coefficients of $t^{j}n^m$ gives the recurrence relations stated in Case $1$. The only facts that need to be verified are
$$
b(r+1,2,1,m,j,1)=0 \quad \bigl(r+1\equiv 3\pmod{6},\quad m,j\geq 0, \quad m+j\geq f(r,2,1)-1\bigr)\,,
$$

$$
\sum_{m=0}^{f(r+1,2,1)}b(r+1,2,1,m,f(r+1,2,1)-m,0)=0 \quad \bigl(r+1\equiv 3\pmod{6}\bigr)\,,
$$
and
$$
\sum_{m=0}^{f(r+1,2,1)-1}b(r+1,2,1,m,f(r+1,2,1)-1-m,0)=0 \quad \bigl(r+1\equiv 3\pmod{6}\bigr)\,.
$$
By the induction hypothesis, we have
$$
b(r,2,1,m,j,1)=0 \quad \bigl(r\equiv 2\pmod{6},\quad m,j\geq 0,\quad m+j\geq f(r,2,1)-1\bigr).
$$
Since
$$
b(r+1,2,1,m,j,1)=\sum_{\ell=m}^{f(r,2,1)}b(r,2,1,\ell,j,1) c_{1}(\ell,m),
$$
then we get that
$$
b(r+1,2,1,m,j,1)=0 \quad \bigl(m,j\geq 0,\quad m+j\geq f(r,2,1)-1\bigr)\,.
$$
Similar with Step $1$, we also have
$$
\sum_{m=0}^{f(r+1,2,1)}b(r+1,2,1,m,f(r+1,2,1)-m,0)=0 \quad \bigl(r+1\equiv 3\pmod{6}\bigr).
$$
By using the recursive formula for $b(r+1,2,1,m,f(r+1,2,1)-1-m,0)$, we get that
\begin{align*}
&\quad \sum_{m=0}^{f(r+1,2,1)-1}b(r+1,2,1,m,f(r+1,2,1)-1-m,0)\\
&=\sum_{m=1}^{f(r,2,1)}b(r+1,2,1,m,f(r,2,1)-m,0)+b(r+1,2,1,0,f(r,2,1),0)\,\\
&=\sum_{m=1}^{f(r,2,1)}\Bigl(\frac{1}{m}b(r,2,1,m-1,f(r,2,1)-m,0)+\frac{1}{2}b(r,2,1,m,f(r,2,1)-m,0)\Bigr)\,\\
&\quad +b(r,2,1,0,f(r,2,1),1)-\sum_{m=0}^{f(r,2,1)-1}\frac{1}{m+1}b(r,2,1,m,f(r,2,1)-1-m,0)\,\\
&\quad +\sum_{m=1}^{f(r,2,1)}\frac{1}{2}b(r,2,1,m,f(r,2,1)-m,0)\,\\
&=\sum_{m=0}^{f(r,2,1)}b(r,2,1,m,f(r,2,1)-m,0)\,\\
&=0 \quad \bigl(r+1\equiv 3\pmod{6}\bigr)\,.
\end{align*}

Step $3$: If $r\equiv 3\pmod{6}$, then $f(r+1,2,1)=f(r,2,1)$. From the definition \ref{deftrnt4}, we have
$$
T(r+1,2,1,n,t)=S_{5}+S_{6}.
$$
With the help of Lemmas \ref{prop1} and \ref{prop2}, comparing the coefficients of $t^{j}n^m$ gives the recurrence relations stated in Case $2$. The only facts that need to be verified are
$$
b(r+1,2,1,m,j,3)=0 \quad \bigl(r+1\equiv 4\pmod{6},\quad m,j\geq 0, \quad m+j\geq f(r,2,1)-1\bigr).
$$
By the induction hypothesis, we have
$$
b(r,2,1,m,j,1)=0 \quad \bigl(r\equiv 3\pmod{6},\quad m,j\geq 0,\quad m+j\geq f(r,2,1)-2\bigr).
$$
If $m\geq 1, j\geq 0$, $m+j \geq f(r,2,1)-1$, since
$$
b(r+1,2,1,m,j,3)=\sum_{\ell=m-1}^{f(r,2,1)}b(r,2,1,\ell,j,1) c(\ell,m),
$$
then we get that
$$
b(r+1,2,1,m,j,3)=0\,.
$$
For $j\geq f(r,2,1)-1$, we have the following three identities:
\begin{align*}
b(r+1,2,1,0,f(r,2,1)+1,3)
&=-\sum_{m=0}^{f(r,2,1)}\sum_{\substack{j_{1}+\ell=f(r,2,1)+1\\ 0 \le j_{1} \le f(r,2,1)-m\\ 1 \le \ell \le m+1 }}b(r,2,1,m,j_{1},1) d(m,\ell)\,\\
&=-\sum_{m=0}^{f(r,2,1)}b(r,2,1,m,f(r,2,1)-m,1) d(m,m+1)\,\\
&=0\quad \bigl(r+1\equiv 4\pmod{6}\bigr)\,.
\end{align*}
\begin{align*}
&\qquad b(r+1,2,1,0,f(r,2,1),3)\\
&=\sum_{m=0}^{f(r,2,1)}\sum_{\substack{j_{1}+\ell=f(r,2,1)\\ 0 \le j_{1} \le f(r,2,1)-m\\ 0 \le \ell \le m}}b(r,2,1,m,j_{1},0) d_{1}(m,\ell)\\
&\quad -\sum_{m=0}^{f(r,2,1)}\sum_{\substack{j_{1}+\ell=f(r,2,1)\\ 0 \le j_{1} \le f(r,2,1)-m\\ 1 \le \ell \le m+1 }}b(r,2,1,m,j_{1},1) d(m,\ell)+b(r,2,1,0,f(r,2,1),1)\,\\
&=\frac{1}{2}\sum_{m=0}^{f(r,2,1)}b(r,2,1,m,f(r,2,1)-m,0)\\
&=0\quad \bigl(r+1\equiv 4\pmod{6}\bigr)\,.
\end{align*}
\begin{align*}
&\qquad b(r+1,2,1,0,f(r,2,1)-1,3)\\
&=\sum_{m=0}^{f(r,2,1)}\sum_{\substack{j_{1}+\ell=f(r,2,1)-1\\ 0 \le j_{1} \le f(r,2,1)-m\\ 0 \le \ell \le m}}b(r,2,1,m,j_{1},0) d_{1}(m,\ell)\\
&\quad -\sum_{m=0}^{f(r,2,1)}\sum_{\substack{j_{1}+\ell=f(r,2,1)-1\\ 0 \le j_{1} \le f(r,2,1)-m\\ 1 \le \ell \le m+1 }}b(r,2,1,m,j_{1},1) d(m,\ell)+b(r,2,1,0,f(r,2,1)-1,1)\,\\
&=\frac{1}{2}\sum_{m=0}^{f(r,2,1)-1}b(r,2,1,m,f(r,2,1)-1-m,0)\\
&\quad -\frac{1}{4}\sum_{m=1}^{f(r,2,1)} m b(r,2,1,m,f(r,2,1)-m,0)\\
&=-\frac{1}{4}\sum_{m=1}^{f(r-1,2,1)+1} m b(r-1,2,1,m-1,f(r-1,2,1)+1-m,0)c(m-1,m)\\
&=-\frac{1}{4}\sum_{m=1}^{f(r-1,2,1)+1} b(r-1,2,1,m-1,f(r-1,2,1)+1-m,0)\\
&=0\quad \bigl(r+1\equiv 4\pmod{6}\bigr)\,.
\end{align*}

Step $4$: If $r\equiv 4\pmod{6}$, then $f(r+1,2,1)=f(r,2,1)$. From the definition \ref{deftrnt4}, we have
$$
T(r+1,2,1,n,t)=S_{3}+S_{4}.
$$
With the help of Lemmas \ref{prop1} and \ref{prop2}, comparing the coefficients of $t^{j}n^m$ gives the recurrence relations stated in Case $3$. The only facts that need to be verified are
$$
b(r+1,2,1,m,j,3)=0 \quad \bigl(r+1\equiv 5\pmod{6},\quad m,j\geq 0, \quad m+j\geq f(r,2,1)\bigr).
$$
By the induction hypothesis, we have
$$
b(r,2,1,m,j,3)=0 \quad \bigl(r\equiv 4\pmod{6},\quad m,j\geq 0,\quad m+j\geq f(r,2,1)-1\bigr).
$$
If $m\geq 1, j\geq 0$, $m+j \geq f(r,2,1)$, since
$$
b(r+1,2,1,m,j,3)=\sum_{\ell=m-1}^{f(r,2,1)}b(r,2,1,\ell,j,3) c(\ell,m)\,,
$$
then we get that
$$
b(r+1,2,1,m,j,3)=0\,.
$$

For $j\geq f(r,2,1)$, we have the following two identities:
\begin{align*}
b(r+1,2,1,0,f(r,2,1)+1,3)
&=-\sum_{m=0}^{f(r,2,1)}\sum_{\substack{j_{1}+\ell=f(r,2,1)+1\\ 0 \le j_{1} \le f(r,2,1)-m\\ 1 \le \ell \le m+1 }}b(r,2,1,m,j_{1},3) d(m,\ell)\,\\
&=-\sum_{m=0}^{f(r,2,1)}\frac{1}{m+1}b(r,2,1,m, f(r,2,1)-m,3)\,\\
&=0\quad \bigl(r+1\equiv 5\pmod{6}\bigr)\,.
\end{align*}
\begin{align*}
b(r+1,2,1,0,f(r,2,1),3)
&=\frac{1}{2}\sum_{m=0}^{f(r,2,1)}b(r,2,1,m,f(r,2,1)-m,2)\\
&=\frac{1}{4}\sum_{m=0}^{f(r-1,2,1)}b(r-1,2,1,m,f(r-1,2,1)-m,0)\\
&=0\quad \bigl(r+1\equiv 5\pmod{6}\bigr)\,.
\end{align*}

Step $5$: If $r\equiv 5\pmod{6}$, then $f(r+1,2,1)=f(r,2,1)$. From the definition \ref{deftrnt4}, we have
$$
T(r+1,2,1,n,t)=S_{3}+S_{4}.
$$
With the help of Lemmas \ref{prop1} and \ref{prop2}, comparing the coefficients of $t^{j}n^m$ gives the recurrence relations stated in Case $3$. The only facts that need to be verified are
$$
b(r+1,2,1,m,j,3)=0 \quad \bigl(r+1\equiv 6\pmod{6},\quad m,j\geq 0, \quad m+j\geq f(r,2,1)+1\bigr).
$$
By the induction hypothesis, we have
$$
b(r,2,1,m,j,3)=0 \quad \bigl(r\equiv 5\pmod{6},\quad m,j\geq 0,\quad m+j\geq f(r,2,1)\bigr).
$$
If $m\geq 1, j\geq 0$, $m+j \geq f(r,2,1)+1$, since
$$
b(r+1,2,1,m,j,3)=\sum_{\ell=m-1}^{f(r,2,1)}b(r,2,1,\ell,j,3) c(\ell,m)\,,
$$
then we get that
$$
b(r+1,2,1,m,j,3)=0\,.
$$

For $j\geq f(r,2,1)+1$, we have the following identity:
\begin{align*}
b(r+1,2,1,0,f(r,2,1)+1,3)
&=-\sum_{m=0}^{f(r,2,1)}\sum_{\substack{j_{1}+\ell=f(r,2,1)+1\\ 0 \le j_{1} \le f(r,2,1)-m\\ 1 \le \ell \le m+1 }}b(r,2,1,m,j_{1},3) d(m,\ell)\,\\
&=-\sum_{m=0}^{f(r,2,1)}\frac{1}{m+1}b(r,2,1,m, f(r,2,1)-m,3)\,\\
&=0\quad \bigl(r+1\equiv 6\pmod{6}\bigr)\,.
\end{align*}

Step $6$: If $r\equiv 6\pmod{6}$, then $f(r+1,2,1)=f(r,2,1)+1$. From the definition \ref{deftrnt4}, we have
$$
T(r+1,2,1,n,t)=S_{7}+S_{8}.
$$
With the help of Lemmas \ref{prop1} and \ref{prop2}, comparing the coefficients of $t^{j}n^m$ gives the recurrence relations stated in Case $4$. The only facts that need to be verified are
$$
b(r+1,2,1,m,j,1)=0 \quad \bigl(r+1\equiv 1\pmod{6},\quad m,j\geq 0, \quad m+j\geq f(r,2,1)+1\bigr).
$$
Notice that the recursive formula for $b(r+1,2,1,m,j,1)=0$ is valid only when $0 \le m \le f(r,2,1),\quad 0 \le j \le f(r,2,1)-m$, we have completed the proof.
\end{proof}

Now we are able to prove our main theorem of this section.
\begin{theorem}\label{maintheorem}
Let $r,p \in \mathbb N$ and $q$ be a positive real number with $q \geq r+1$, we have,
\begin{align*}
\sum_{n=1}^\infty \frac{H_n^{(p,r,2,1)}}{n^{q}}
&=\sum_{m=0}^{f(r,2,1)}\sum_{j=0}^{f(r,2,1)-m}
\biggl(b(r,2,1,j,m,0)S_{p-m,q-j}^{+,+}+b(r,2,1,j,m,1)S_{p-m,q-j}^{-,-}\\
&\qquad+b(r,2,1,j,m,2)S_{p-m,q-j}^{+,-}+b(r,2,1,j,m,3)S_{p-m,q-j}^{-,+}\biggr).
\end{align*}
Therefore Euler sums of the generalized alternating hyperharmonic numbers $H_n^{(p,r,2,1)}$ can be expressed in terms of linear combinations of classical (alternating) Euler sums.
\end{theorem}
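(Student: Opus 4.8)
The plan is to reduce the Euler sum to the polynomials $T(r,2,1,n,t)$ of Lemma \ref{lemma1} via a bridge identity, substitute the explicit closed form, interchange summations, and recognize each inner sum over $t$ as a classical (alternating) harmonic number so that the four sign-patterns in $T$ become the four Euler sum types $S^{+,+},S^{-,-},S^{+,-},S^{-,+}$.

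\textbf{Step 1 (bridge identity).} First I would prove, by induction on $r$, that for all $p,r,n$,
\begin{align*}
H_n^{(p,r,2,1)}=\sum_{t=1}^{n}\frac{T(r,2,1,n,t)}{t^{p}}.
\end{align*}
For $r=1$ both sides equal $H_n^{(p)}$, since $T(1,2,1,n,t)=1$. For the inductive step, Definitions \ref{deftrnt1} and \ref{deftrnt4} impose the \emph{same} case split modulo $3$. In the non-alternating case, $H_n^{(p,r,2,1)}=\sum_{k=1}^{n}H_k^{(p,r-1,2,1)}=\sum_{k=1}^{n}\sum_{t=1}^{k}T(r-1,2,1,k,t)/t^{p}$, and interchanging the order of summation gives $\sum_{t=1}^{n}t^{-p}\sum_{k=t}^{n}T(r-1,2,1,k,t)=\sum_{t=1}^{n}T(r,2,1,n,t)/t^{p}$, where the inner sum collapses by Definition \ref{deftrnt4}; the lower limit $t$ in that recurrence matches exactly what the interchange produces. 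The alternating case is identical, carrying along the factor $(-1)^{k-1}$.

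\textbf{Step 2 (substitution, interchange, identification).} Substituting the closed form of Lemma \ref{lemma1} and dividing by $n^{q}$ yields
\begin{align*}
\sum_{n=1}^{\infty}\frac{H_n^{(p,r,2,1)}}{n^{q}}
&=\sum_{m=0}^{f(r,2,1)}\sum_{j=0}^{f(r,2,1)-m}\sum_{n=1}^{\infty}\frac{1}{n^{q-m}}\sum_{t=1}^{n}\frac{1}{t^{p-j}}\\
&\quad\times\bigl(b_{0}+b_{1}(-1)^{n-1}(-1)^{t-1}+b_{2}(-1)^{n-1}+b_{3}(-1)^{t-1}\bigr),
\end{align*}
with $b_{k}=b(r,2,1,m,j,k)$. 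Since $\sum_{t=1}^{n}t^{-(p-j)}=H_n^{(p-j)}$ and $\sum_{t=1}^{n}(-1)^{t-1}t^{-(p-j)}=\overline{H}_n^{(p-j)}$, the four sign-patterns produce $S^{+,+}_{p-j,q-m}$, $S^{-,-}_{p-j,q-m}$, $S^{+,-}_{p-j,q-m}$ and $S^{-,+}_{p-j,q-m}$ respectively. Finally, swapping the roles of the summation indices $m$ and $j$ (legitimate because the region $m+j\le f(r,2,1)$ is symmetric) converts $b(r,2,1,m,j,k)\,S_{p-j,q-m}$ into $b(r,2,1,j,m,k)\,S_{p-m,q-j}$, which is precisely the claimed expression.

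\textbf{Main obstacle.} The genuinely delicate point is not the algebra but the convergence that licenses the interchange of summations and the splitting into four separate series in Step 2. One must confirm that every Euler sum $S^{\pm,\pm}_{p-m,q-j}$ converges under the hypothesis $q\ge r+1$, including the degenerate cases $p-m\le 0$, in which the inner (alternating) harmonic number is a polynomially growing power sum $\sum_{t=1}^{n}t^{m-p}=O(n^{m-p+1})$. Here the key input is the degree bound $f(r,2,1)\le r-1$, immediate from Definition \ref{deftrnt5} since $f$ increases by at most $1$ at each step from $f(1,2,1)=0$; this forces $q-j\ge m+2$ whenever $m+j\le f(r,2,1)$ and $q\ge r+1$, which exceeds the convergence threshold $m-p+2$ because $p\ge1$. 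Bounding every term by its non-alternating counterpart $S^{+,+}_{\cdot,\cdot}$ then gives the absolute convergence needed to justify all rearrangements, so the hypothesis $q\ge r+1$ enters in an essential way.
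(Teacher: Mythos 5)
Your proposal is correct and takes essentially the same route as the paper: the paper's entire proof consists of the bridge identity $\sum_{n\ge 1}H_n^{(p,r,2,1)}/n^{q}=\sum_{n\ge 1}n^{-q}\sum_{t=1}^{n}t^{-p}\,T(r,2,1,n,t)$ (asserted there with a bare ``Note that'') followed by an appeal to Lemma \ref{lemma1}, plus the comparison $|H_n^{(p,r,2,1)}|\le H_n^{(p,r)}$ for convergence. Your Steps 1--2 and the convergence analysis simply supply the details the paper leaves implicit: the induction behind the bridge identity, the interchange and index swap $(m,j)\mapsto(j,m)$, and the convergence of each individual sum $S^{\pm,\pm}_{p-m,q-j}$ needed to justify splitting the series.
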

\begin{proof}
The convergence of $\sum_{n=1}^\infty \frac{H_n^{(p,r,2,1)}}{n^{q}}$ is guaranteed by $\vert {H_n^{(p,r,2,1)}} \vert  \le H_n^{(p,r)}$ and the convergence of $\zeta_{H^{(p,r)}}(q)$ (see \cite{Dil}). Note that
\begin{align*}
\sum_{n=1}^\infty \frac{H_n^{(p,r,2,1)}}{n^{q}}
&=\sum_{n=1}^\infty \frac{1}{n^{q}}\sum_{t=1}^{n}\frac{1}{t^{p}}T(r,2,1,n,t)\,,
\end{align*}
with the help of Lemma \ref{lemma1}, we get the desired result.
\end{proof}

\section{Euler sums of generalized alternating hyperharmonic numbers $H_n^{(p,r,1,2)}$}

In this section, we prove that Euler sums of the generalized alternating hyperharmonic numbers $H_n^{(p,r,1,2)}$ can be expressed in terms of linear combinations of classical (alternating) Euler sums. In order to reach our goal, we introduce some notations.

\begin{Definition}\label{deftrnt6}
For $r, n, t \in \mathbb N$, define
\begin{align*}
T(r,1,2,n,t)=\begin{cases}
\sum_{k=t}^{n} T(r-1,1,2,k,t),& \text{$r-1\equiv 1\pmod{3}$};\\
\sum_{k=t}^n (-1)^{k-1} T(r-1,1,2,k,t),& \text{$r-1\equiv 2,3\pmod{3}$}.
\end{cases}
\end{align*}
\end{Definition}

\begin{Definition}\label{deftrnt7}
For $r \in \mathbb N$, define
\begin{align*}
f(r,1,2)=\begin{cases}
f(r-1,1,2)+1,& \text{$r-1\equiv 1,3\pmod{3}$};\\
f(r-1,1,2),& \text{$r-1\equiv 2\pmod{3}$}.
\end{cases}
\end{align*}
\end{Definition}

By direct computation, we have
\begin{align*}
&T(1,1,2,n,t)= 1\,,\\
&T(2,1,2,n,t)=n-t+1\,,\\
&T(3,1,2,n,t)=\frac{1}{2}(-1)^{n-1}(n-t)+\frac{3}{4}(-1)^{n-1}+\frac{1}{4}(-1)^{t-1}\,,\\
&T(4,1,2,n,t)=\frac{1}{4}n^{2}+(-\frac{1}{2}t+1)n+\frac{1}{4}t^{2}-t+\frac{7}{8}+\frac{1}{8}(-1)^{n-1+t-1}\,,\\
&T(5,1,2,n,t)=\frac{1}{12}n^{3}+\bigl(-\frac{1}{4}t+\frac{5}{8}\bigr)n^{2}
+\bigl(\frac{1}{4}t^{2}-\frac{5}{4}t+\frac{17}{12}\bigr)n-\frac{1}{12}t^{3}+\frac{5}{8}t^{2}\\
&\qquad \qquad \quad \quad \quad-\frac{17}{12}t+\frac{15}{16}+\frac{1}{16}(-1)^{n-1+t-1}\,,\\
&T(6,1,2,n,t)=\frac{1}{24}(-1)^{n-1}n^{3}+(-\frac{1}{8}t+\frac{3}{8})(-1)^{n-1}n^{2}
+(\frac{1}{8}t^{2}-\frac{3}{4}t)(-1)^{n-1}n\\
&\qquad \qquad \quad \quad \quad  +\bigl(\frac{49}{48}(-1)^{n-1}+\frac{1}{16}(-1)^{t-1}\bigr)n-\frac{1}{24}(-1)^{n-1}t^{3}
+\frac{3}{8}(-1)^{n-1}t^{2}\\
&\qquad \qquad \quad \quad \quad-\bigl(\frac{49}{48}(-1)^{n-1}+\frac{1}{16}(-1)^{t-1}\bigr)t+\frac{13}{16}(-1)^{n-1}+\frac{3}{16}(-1)^{t-1}\,,\\
&T(7,1,2,n,t)=\frac{1}{96}n^{4}+\bigl(-\frac{1}{24}t+\frac{7}{48}\bigr)n^{3}
+\bigl(\frac{1}{16}t^{2}-\frac{7}{16}t+\frac{17}{24}\bigr)n^{2}-\frac{1}{24}t^{3}n\\
&\qquad \qquad \qquad \quad +\bigl(\frac{7}{16}t^{2}-\frac{17}{12}t+\frac{133}{96}+\frac{1}{32}(-1)^{n-1+t-1}\bigr)n +\frac{1}{96}t^{4}-\frac{7}{48}t^{3}\\
&\qquad \qquad \qquad \quad +\frac{17}{24}t^{2}-\bigl(\frac{133}{96}+\frac{1}{32}(-1)^{n-1+t-1}\bigr)t+\frac{57}{64}+\frac{7}{64}(-1)^{n-1+t-1}\,.
\end{align*}

\begin{align*}
&f(1,1,2)=0,\quad f(2,1,2)=1,\quad f(3,1,2)=1,\quad f(4,1,2)=2,\\
&f(5,1,2)=3,\quad f(6,1,2)=3,\quad f(7,1,2)=4\,.
\end{align*}

We now present main results of this section. We omit their proofs which are similar to those of Lemma \ref{lemma1} and Theorem \ref{maintheorem}.

\begin{Lem}\label{lemma12}
For $r, n, t \in \mathbb N$, we have
\begin{align*}
T(r,1,2,n,t)
&=\sum_{m=0}^{f(r,1,2)}\sum_{j=0}^{f(r,1,2)-m}\biggl(b(r,1,2,m,j,0)+b(r,1,2,m,j,1)(-1)^{n-1+t-1}\\
&\qquad+b(r,1,2,m,j,2)(-1)^{n-1}+b(r,1,2,m,j,3)(-1)^{t-1}\biggr)t^{j} n^{m}\,,
\end{align*}

where
\begin{align*}
&b(r,1,2,m,j,2)=b(r,1,2,m,j,3)=0\quad \bigl(r\equiv 1,2\pmod{3}\bigr)\,,\\
&b(r,1,2,m,j,0)=b(r,1,2,m,j,1)=0\quad \bigl(r\equiv 3\pmod{3}\bigr)\,,\\
&b(r,1,2,m,j,1)=0 \quad \bigl(r\equiv 1\pmod{3},\quad m,j\geq 0,\quad m+j\geq f(r,1,2)\bigr)\,,\\
&b(r,1,2,m,j,1)=0 \quad \bigl(r\equiv 2\pmod{3},\quad m,j\geq 0,\quad m+j\geq f(r,1,2)-1\bigr)\,,
\end{align*}
and for $k=0,1,2,3$, $b(r,1,2,m,j,k)$ satisfy the following recurrence relations:\\
Case $1^{\prime}$, if $r\equiv 1\pmod{3}$, then we have $f(r+1,1,2)=f(r,1,2)+1$,
\begin{align*}
&b(r+1,1,2,m,j,0)=\sum_{\ell=m-1}^{f(r,1,2)}b(r,1,2,\ell,j,0) c(\ell,m)\,,\\
&(1 \le m \le f(r,1,2)+1,\quad 0 \le j \le f(r,1,2)+1-m)\,;\\
&\quad b(r+1,1,2,0,j,0)\\
&=-\sum_{m=0}^{f(r,1,2)}\sum_{\substack{j_{1}+\ell=j\\ 0 \le j_{1} \le f(r,1,2)-m\\ 1 \le \ell \le m+1 }}b(r,1,2,m,j_{1},0) d(m,\ell)+b(r,1,2,0,j,0)\\
&\quad +\sum_{m=0}^{f(r,1,2)}\sum_{\substack{j_{1}+\ell=j\\ 0 \le j_{1} \le f(r,1,2)-m\\ 0 \le \ell \le m }}b(r,1,2,m,j_{1},1) d_{1}(m,\ell)\quad (0 \le j \le f(r,1,2)+1)\,;\\
&b(r+1,1,2,m,j,1)=\sum_{\ell=m}^{f(r,1,2)}b(r,1,2,\ell,j,1) c_{1}(\ell,m)\,,\\
&(0 \le m \le f(r,1,2),\quad 0 \le j \le f(r,1,2)-m)\,.
\end{align*}

Case $2^{\prime}$, if $r\equiv 2\pmod{3}$, then we have $f(r+1,2,1)=f(r,2,1)$,
\begin{align*}
&b(r+1,1,2,m,j,2)=\sum_{\ell=m}^{f(r,1,2)}b(r,1,2,\ell,j,0) c_{1}(\ell,m)\,,\\
&(0 \le m \le f(r,1,2),\quad 0 \le j \le f(r,1,2)-m)\,;\\
&b(r+1,1,2,m,j,3)=\sum_{\ell=m-1}^{f(r,1,2)}b(r,1,2,\ell,j,1) c(\ell,m)\,,\\
&(1 \le m \le f(r,1,2)+1,\quad 0 \le j \le f(r,1,2)+1-m)\,;\\
&\quad b(r+1,1,2,0,j,3)\\
&=\sum_{m=0}^{f(r,1,2)}\sum_{\substack{j_{1}+\ell=j\\ 0 \le j_{1} \le f(r,1,2)-m\\ 0 \le \ell \le m}}b(r,1,2,m,j_{1},0) d_{1}(m,\ell)+b(r,1,2,0,j,1)\\
&\quad -\sum_{m=0}^{f(r,1,2)}\sum_{\substack{j_{1}+\ell=j\\ 0 \le j_{1} \le f(r,1,2)-m\\ 1 \le \ell \le m+1 }}b(r,1,2,m,j_{1},1) d(m,\ell)\quad (0 \le j \le f(r,1,2)+1)\,.
\end{align*}

Case $3^{\prime}$, if $r\equiv 3\pmod{3}$, then we have $f(r+1,1,2)=f(r,1,2)+1$,
\begin{align*}
&b(r+1,1,2,m,j,0)=\sum_{\ell=m-1}^{f(r,1,2)}b(r,1,2,\ell,j,2) c(\ell,m)\,,\\
&(1 \le m \le f(r,1,2)+1,\quad 0 \le j \le f(r,1,2)+1-m)\,;\\
&\quad b(r+1,1,2,0,j,0)\\
&=-\sum_{m=0}^{f(r,1,2)}\sum_{\substack{j_{1}+\ell=j\\ 0 \le j_{1} \le f(r,1,2)-m\\ 1 \le \ell \le m+1 }}b(r,1,2,m,j_{1},2) d(m,\ell)+b(r,1,2,0,j,2)\\
&\quad +\sum_{m=0}^{f(r,1,2)}\sum_{\substack{j_{1}+\ell=j\\ 0 \le j_{1} \le f(r,1,2)-m\\ 0 \le \ell \le m }}b(r,1,2,m,j_{1},3) d_{1}(m,\ell)\quad (0 \le j \le f(r,1,2)+1)\,;\\
&b(r+1,1,2,m,j,1)=\sum_{\ell=m}^{f(r,1,2)}b(r,1,2,\ell,j,3) c_{1}(\ell,m)\,,\\
&(0 \le m \le f(r,1,2),\quad 0 \le j \le f(r,1,2)-m)\,.
\end{align*}
The initial value is given by $T(1,1,2,n,t)= 1$.
\end{Lem}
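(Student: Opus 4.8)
The plan is to prove Lemma~\ref{lemma12} by induction on $r$, following the scheme of the proof of Lemma~\ref{lemma1}; the essential simplification is that the sign structure of $T(r,1,2,n,t)$ is periodic of period $3$ in $r$ rather than period $6$, so only three cases arise instead of six. For the base case $r=1$ we have $f(1,1,2)=0$ and $T(1,1,2,n,t)=1$, so the single nonzero coefficient is $b(1,1,2,0,0,0)=1$ and every asserted vanishing condition holds vacuously. For the inductive step I assume the claimed expansion for $T(r,1,2,n,t)$ and apply Definition~\ref{deftrnt6}: the map $T(r,1,2,\cdot,t)\mapsto T(r+1,1,2,\cdot,t)$ is an ordinary summation $\sum_{k=t}^{n}$ when $r\equiv 1\pmod 3$ and an alternating summation $\sum_{k=t}^{n}(-1)^{k-1}$ when $r\equiv 2,0\pmod 3$.

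The heart of the induction is to evaluate the inner power sums $\sum_{k=t}^{n}k^{m}$ and $\sum_{k=t}^{n}(-1)^{k-1}k^{m}$ by Lemmas~\ref{prop1} and~\ref{prop2}, and to track how the four sign-types indexed by $k=0,1,2,3$, namely $1,\,(-1)^{n-1+t-1},\,(-1)^{n-1},\,(-1)^{t-1}$, are permuted. Under an ordinary summation a type-$0$ summand stays type $0$ (giving the $c(\ell,m)$ contribution in the $n$-variable and the $d(m,\ell),\,e(m,0)$ corrections in the $t$-variable), while a type-$1$ summand $b\,(-1)^{n-1+t-1}t^{j}k^{m}=b\,(-1)^{t-1}t^{j}(-1)^{k-1}k^{m}$ is summed alternately and splits into a type-$1$ part through $c_{1}$ and a type-$0$ part through $d_{1}$; matching coefficients of $t^{j}n^{m}$ reproduces the relations of Case~$1'$. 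Under an alternating summation a type-$0$ summand becomes $(-1)^{k-1}k^{m}$ and splits into type-$2$ and type-$3$ parts, whereas a type-$1$ summand acquires a second factor $(-1)^{k-1}$ that cancels, reducing it to an \emph{ordinary} power sum $\sum_{k=t}^{n}k^{m}$ which raises the polynomial degree by one and feeds a type-$3$ part through $c$ together with a type-$0$/type-$1$ correction; this yields Cases~$2'$ and~$3'$. In particular $f(r,1,2)$ increases exactly on the ordinary step ($r\equiv 1$) and on the \emph{second} alternating step ($r\equiv 0$), where the sign cancellation restores an honest power sum, and remains fixed on the first alternating step ($r\equiv 2$); this is precisely the prescription of Definition~\ref{deftrnt7}.

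It then remains to propagate the four vanishing assertions. The identities $b(r,1,2,m,j,2)=b(r,1,2,m,j,3)=0$ for $r\equiv 1,2\pmod 3$ and $b(r,1,2,m,j,0)=b(r,1,2,m,j,1)=0$ for $r\equiv 0\pmod 3$ are immediate, since each case's recurrences populate only the sign-types retained in the conclusion: Cases~$1'$ and~$3'$ produce only types $0$ and $1$, while Case~$2'$ produces only types $2$ and $3$. The two leading-degree conditions on the type-$1$ coefficient are handled exactly as in the proof of Lemma~\ref{lemma1}. When $r\equiv 1$ one uses $b(r+1,1,2,m,j,1)=\sum_{\ell=m}^{f(r,1,2)}b(r,1,2,\ell,j,1)\,c_{1}(\ell,m)$ together with the inductive hypothesis $b(r,1,2,\ell,j,1)=0$ for $\ell+j\geq f(r,1,2)$: since $\ell\geq m$ in the sum, $m+j\geq f(r,1,2)=f(r+1,1,2)-1$ forces every term to vanish, which is the required bound for $r+1\equiv 2$. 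When $r\equiv 0$ the corresponding recurrence $b(r+1,1,2,m,j,1)=\sum_{\ell=m}^{f(r,1,2)}b(r,1,2,\ell,j,3)\,c_{1}(\ell,m)$ is defined only for $0\le j\le f(r,1,2)-m$, so $b(r+1,1,2,m,j,1)=0$ automatically once $m+j\geq f(r,1,2)+1=f(r+1,1,2)$, the bound needed for $r+1\equiv 1$. The case $r\equiv 2$ requires nothing further, since Case~$2'$ produces no type-$0$ or type-$1$ coefficients at all.

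The main obstacle is therefore not any single hard estimate but the disciplined bookkeeping of the sign-type permutations and, in particular, the verification that the degree-raising operations land where Definition~\ref{deftrnt7} predicts — that it is the \emph{second} alternating summation, not the first, which increments $f(r,1,2)$. This is exactly the cancellation $(-1)^{k-1}\cdot(-1)^{k-1}=1$ that reconstitutes an ordinary power sum out of a type-$1$ term, and carrying it correctly through the coefficient recurrences is where the care is required. Granting Lemma~\ref{lemma12}, the corresponding Euler sum identity for $H_n^{(p,r,1,2)}$ follows verbatim from the computation in the proof of Theorem~\ref{maintheorem}, via $\sum_{n\ge 1}H_n^{(p,r,1,2)}/n^{q}=\sum_{n\ge 1}n^{-q}\sum_{t=1}^{n}t^{-p}\,T(r,1,2,n,t)$ and the convergence bound $|H_n^{(p,r,1,2)}|\le H_n^{(p,r)}$.
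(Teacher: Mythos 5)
Your overall strategy --- induction on $r$ with three cases according to $r\bmod 3$, evaluating the inner sums via Lemmas \ref{prop1} and \ref{prop2}, and tracking how the four sign-types $1$, $(-1)^{n-1+t-1}$, $(-1)^{n-1}$, $(-1)^{t-1}$ are permuted --- is exactly the argument the paper intends (the paper omits the proof of Lemma \ref{lemma12}, referring back to the proof of Lemma \ref{lemma1}), and your propagation of the vanishing conditions is correct as far as it goes. But there is one genuine gap, at precisely the point that makes the lemma delicate. In Case $2'$ ($r\equiv 2\pmod 3$, where $f(r+1,1,2)=f(r,1,2)$), your own description of the mechanics correctly notes that a type-$1$ summand, after the cancellation $(-1)^{k-1}\cdot(-1)^{k-1}=1$, becomes an ordinary power sum and therefore \emph{raises the degree by one}, feeding type $3$ through $c(\ell,m)$. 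You never reconcile this with the requirement that the new expansion close at total degree $f(r+1,1,2)=f(r,1,2)$: as written, the degree-raised type-$3$ output could a priori have total degree $f(r,1,2)+1$, contradicting the ansatz. The reconciliation is the inductive hypothesis that you state but never use, namely $b(r,1,2,m,j,1)=0$ for $r\equiv 2\pmod 3$ and $m+j\geq f(r,1,2)-1$: it forces every nonzero $b(r,1,2,\ell,j,1)$ to satisfy $\ell+j\leq f(r,1,2)-2$, so the coefficients $b(r+1,1,2,m,j,3)=\sum_{\ell=m-1}^{f(r,1,2)}b(r,1,2,\ell,j,1)c(\ell,m)$ can be nonzero only when $m+j\leq f(r,1,2)-1$, and likewise the $d$-corrections and the $e(m,0)$-term entering $b(r+1,1,2,0,j,3)$ vanish for $j\geq f(r,1,2)$. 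Only with this step is the Case $2'$ ansatz, and hence the induction, legitimate.

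Relatedly, your closing diagnosis --- that the cancellation is what makes the \emph{second} alternating summation, not the first, increment $f(r,1,2)$ --- misidentifies the asymmetry: the same cancellation occurs on both alternating steps (for type-$1$ terms when $r\equiv 2$, for type-$2$ terms when $r\equiv 0$). What distinguishes them is that the type reconstituted into an ordinary power sum on the first step (type $1$) carries a top-degree vanishing condition, while the one reconstituted on the second step (type $2$) does not. This is in fact the entire reason the two conditions on $b(r,1,2,m,j,1)$ appear in the statement of the lemma: the condition at $r\equiv 1$ (vanishing for $m+j\geq f(r,1,2)$) propagates through the $c_1$-recurrence to the condition at $r\equiv 2$ (vanishing for $m+j\geq f(r,1,2)-1$), exactly as you argue, and the latter is then consumed in Case $2'$ as above. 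Once that single use is added, your induction closes and the proof coincides with the paper's intended one.
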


\begin{theorem}\label{maintheorem12}
Let $r,p \in \mathbb N$ and $q$ be a positive real number with $q \geq r+1$, we have,
\begin{align*}
\sum_{n=1}^\infty \frac{H_n^{(p,r,1,2)}}{n^{q}}
&=\sum_{m=0}^{f(r,1,2)}\sum_{j=0}^{f(r,1,2)-m}
\biggl(b(r,1,2,j,m,0)S_{p-m,q-j}^{+,+}+b(r,1,2,j,m,1)S_{p-m,q-j}^{-,-}\\
&\qquad+b(r,1,2,j,m,2)S_{p-m,q-j}^{+,-}+b(r,1,2,j,m,3)S_{p-m,q-j}^{-,+}\biggr).
\end{align*}
Therefore Euler sums of the generalized alternating hyperharmonic numbers $H_n^{(p,r,1,2)}$ can be expressed in terms of linear combinations of classical (alternating) Euler sums.
\end{theorem}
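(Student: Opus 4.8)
The plan is to follow the proof of Theorem \ref{maintheorem} almost verbatim, replacing the modulus-$6$ machinery $T(r,2,1,\cdot,\cdot)$ and Lemma \ref{lemma1} by their modulus-$3$ analogues $T(r,1,2,\cdot,\cdot)$ and Lemma \ref{lemma12}. The cornerstone is the factorization identity
\[
H_n^{(p,r,1,2)}=\sum_{t=1}^{n}\frac{1}{t^{p}}\,T(r,1,2,n,t),
\]
which I would establish by induction on $r$. For $r=1$ both sides equal $\sum_{t=1}^{n}t^{-p}=H_n^{(p)}$, since $T(1,1,2,n,t)=1$. For the inductive step I would compare the recursion of Definition \ref{deftrnt1}, specialized to $s_1=1,\ s_2=2$, with Definition \ref{deftrnt6}: both switch between plain and alternating inner summation according to the same residue of $r-1 \pmod 3$. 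Feeding the inductive hypothesis into the appropriate branch of Definition \ref{deftrnt1} and interchanging the order of the two finite sums via $\sum_{k=1}^{n}\sum_{t=1}^{k}=\sum_{t=1}^{n}\sum_{k=t}^{n}$ reproduces exactly the defining recursion of $T(r,1,2,n,t)$ in Definition \ref{deftrnt6}, closing the induction.

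With this identity in hand I would write $\sum_{n\ge1}H_n^{(p,r,1,2)}/n^{q}=\sum_{n\ge1}n^{-q}\sum_{t=1}^{n}t^{-p}\,T(r,1,2,n,t)$ and insert the polynomial expansion of $T(r,1,2,n,t)$ supplied by Lemma \ref{lemma12}. Splitting according to the four sign patterns $1$, $(-1)^{n-1+t-1}$, $(-1)^{n-1}$, $(-1)^{t-1}$ attached to the monomial $t^{j}n^{m}$, each contribution separates into an outer $n$-series times an inner sum of the form $\sum_{t=1}^{n}(\pm1)^{t-1}t^{j-p}$, that is $H_n^{(p-j)}$ or $\overline{H}_n^{(p-j)}$. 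The four combinations then match $S_{p-j,q-m}^{+,+}$, $S_{p-j,q-m}^{-,-}$, $S_{p-j,q-m}^{+,-}$, $S_{p-j,q-m}^{-,+}$ respectively. Finally, relabeling the dummy indices $m\leftrightarrow j$—legitimate because the summation region $m+j\le f(r,1,2)$ is symmetric in $m$ and $j$—turns the coefficients $b(r,1,2,m,j,k)$ into $b(r,1,2,j,m,k)$ and the Euler sums into $S_{p-m,q-j}^{\pm,\pm}$, yielding precisely the asserted form.

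The one delicate point, and the step I would verify with care, is the legitimacy of interchanging the infinite $n$-summation with the finite sums, together with the individual well-definedness of every $S_{p-m,q-j}^{\pm,\pm}$. Global convergence is inherited from Theorem \ref{maintheorem}: the triangle inequality gives $|H_n^{(p,r,1,2)}|\le H_n^{(p,r)}$ by a one-line induction over the two branches of Definition \ref{deftrnt1}, and $\zeta_{H^{(p,r)}}(q)$ converges for $q>r$ (see \cite{Dil}). For the term-by-term claim I would check, from the recursion of Definition \ref{deftrnt7} and the tabulated values $f(1,1,2)=0,\dots,f(7,1,2)=4$, that $f(r,1,2)\le r-1$. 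Combined with $q\ge r+1$ and $p\ge1$ this gives $q-m\ge (r+1)-(r-1)=2>1$ and $q+p-m-j\ge p+2\ge 3>2$; the first inequality handles the outer series when the inner index $p-j$ is positive, the second handles the case in which $p-j\le 0$ and $H_n^{(p-j)}$ (or $\overline{H}_n^{(p-j)}$) grows polynomially under the power-sum convention. These bounds render every series absolutely convergent, so both the interchange and the splitting are justified; this is exactly where the hypothesis $q\ge r+1$ is used. Since each $S_{p-m,q-j}^{\pm,\pm}$ is a classical (alternating) Euler sum, the stated linear-combination expression follows, and with it the final assertion of the theorem.
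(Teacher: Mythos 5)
Your proposal is correct and follows essentially the same route as the paper: the paper's (omitted) proof is just the proof of Theorem \ref{maintheorem} transplanted to the $(1,2)$ case, namely the factorization $H_n^{(p,r,1,2)}=\sum_{t=1}^{n}t^{-p}\,T(r,1,2,n,t)$ combined with the expansion of Lemma \ref{lemma12}. You in fact supply details the paper leaves implicit (the induction establishing the factorization, the bound $f(r,1,2)\le r-1$, and the term-by-term convergence justifying the splitting and the $m\leftrightarrow j$ relabeling), all of which check out.
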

\begin{proof}
The proof is similar to that of Theorem \ref{maintheorem}, so we omit it.
\end{proof}

\section{Euler sums of generalized alternating hyperharmonic numbers $H_n^{(p,r,s_{1},s_{2})}$}

In this section, we prove that Euler sums of the generalized alternating hyperharmonic numbers $H_n^{(p,r,s_{1},s_{2})}$ can be expressed in terms of linear combinations of classical (alternating) Euler sums. In order to reach our goal, we introduce some notations.

\begin{Definition}\label{deftrnt8}
For $r, n, t \in \mathbb N$, define
\begin{align*}
{\scriptsize
T(r,s_{1},s_{2},n,t)=\begin{cases}
\sum_{k=t}^{n} T(r-1,s_{1},s_{2},k,t),& \text{$r-1\equiv 1,\cdots,s_{1}\pmod{s_{1}+s_{2}}$};\\
\sum_{k=t}^n (-1)^{k-1} T(r-1,s_{1},s_{2},k,t),& \text{$r-1\equiv s_{1}+1,\cdots,s_{1}+s_{2}\pmod{s_{1}+s_{2}}$}.
\end{cases}
}
\end{align*}
\end{Definition}

\begin{Definition}\label{deftrnt9}
Let $f(1,s_{1},s_{2})=0$. For $r \in \mathbb N$, define $f(r,s_{1},s_{2})$ recursively as:\\
Case $s_{2}$=odd,
if $r-1\equiv 1, 2,\cdots, s_{1}\pmod{2(s_{1}+s_{2})}$, then
$$
f(r,s_{1},s_{2})=f(r-1,s_{1},s_{2})+1;
$$
if $r-1\equiv s_{1}+1, s_{1}+3,\cdots, s_{1}+s_{2}\pmod{2(s_{1}+s_{2})}$, then
$$
f(r,s_{1},s_{2})=f(r-1,s_{1},s_{2});
$$
if $r-1\equiv s_{1}+2, s_{1}+4,\cdots, s_{1}+s_{2}-1\pmod{2(s_{1}+s_{2})}$, then
$$
f(r,s_{1},s_{2})=f(r-1,s_{1},s_{2})+1;
$$
if $r-1\equiv s_{1}+s_{2}+1, s_{1}+s_{2}+2,\cdots, 2s_{1}+s_{2}\pmod{2(s_{1}+s_{2})}$, then
$$
f(r,s_{1},s_{2})=f(r-1,s_{1},s_{2});
$$
if $r-1\equiv 2s_{1}+s_{2}+1, 2s_{1}+s_{2}+3,\cdots, 2(s_{1}+s_{2})\pmod{2(s_{1}+s_{2})}$, then
$$
f(r,s_{1},s_{2})=f(r-1,s_{1},s_{2})+1;
$$
if $r-1\equiv 2s_{1}+s_{2}+2, 2s_{1}+s_{2}+4,\cdots, 2(s_{1}+s_{2})-1\pmod{2(s_{1}+s_{2})}$, then
$$
f(r,s_{1},s_{2})=f(r-1,s_{1},s_{2}).\\
$$
Case $s_{2}$=even,
if $r-1\equiv 1, 2,\cdots, s_{1}\pmod{s_{1}+s_{2}}$, then
$$
f(r,s_{1},s_{2})=f(r-1,s_{1},s_{2})+1\,;
$$
if $r-1\equiv s_{1}+1, s_{1}+3,\cdots, s_{1}+s_{2}-1\pmod{s_{1}+s_{2}}$, then
$$
f(r,s_{1},s_{2})=f(r-1,s_{1},s_{2})\,;
$$
if $r-1\equiv s_{1}+2, s_{1}+4,\cdots, s_{1}+s_{2}\pmod{s_{1}+s_{2}}$, then
$$
f(r,s_{1},s_{2})=f(r-1,s_{1},s_{2})+1\,.
$$
\end{Definition}

We now present main results of this section. We omit their proofs which are similar to those of Lemmas \ref{lemma1},\ref{lemma12} and Theorems \ref{maintheorem},\ref{maintheorem12}.

\begin{Lem}\label{mainlemma}
For $r, n, t \in \mathbb N$, we have
\begin{align*}
&\quad T(r,s_{1},s_{2},n,t)\\
&=\sum_{m=0}^{f(r,s_{1},s_{2})}\sum_{j=0}^{f(r,s_{1},s_{2})-m}\biggl(b(r,s_{1},s_{2},m,j,0)+b(r,s_{1},s_{2},m,j,1)(-1)^{n-1+t-1}\\
&\qquad+b(r,s_{1},s_{2},m,j,2)(-1)^{n-1}+b(r,s_{1},s_{2},m,j,3)(-1)^{t-1}\biggr)t^{j} n^{m}\,.
\end{align*}

For $k=0,1,2,3$, $b(r,s_{1},s_{2},m,j,k)$ satisfy the following recurrence relations. If $s_{2}$=odd, then we get that:\\
Case $I$, if $r\equiv 1, 2,\cdots, s_{1}\pmod{2(s_{1}+s_{2})}$, then we have $f(r+1,s_{1},s_{2})=f(r,s_{1},s_{2})+1$,
\begin{align*}
&b(r+1,s_{1},s_{2},m,j,0)=\sum_{\ell=m-1}^{f(r,s_{1},s_{2})}b(r,s_{1},s_{2},\ell,j,0) c(\ell,m)\,,\\
&(1 \le m \le f(r,s_{1},s_{2})+1,\quad 0 \le j \le f(r,s_{1},s_{2})+1-m)\,;\\
&\quad b(r+1,s_{1},s_{2},0,j,0)\\
&=-\sum_{m=0}^{f(r,s_{1},s_{2})}\sum_{\substack{j_{1}+\ell=j\\ 0 \le j_{1} \le f(r,s_{1},s_{2})-m\\ 1 \le \ell \le m+1 }}b(r,s_{1},s_{2},m,j_{1},0) d(m,\ell)+b(r,s_{1},s_{2},0,j,0)\\
&\quad +\sum_{m=0}^{f(r,s_{1},s_{2})}\sum_{\substack{j_{1}+\ell=j\\ 0 \le j_{1} \le f(r,s_{1},s_{2})-m\\ 0 \le \ell \le m }}b(r,s_{1},s_{2},m,j_{1},1) d_{1}(m,\ell)\quad(0 \le j \le f(r,s_{1},s_{2})+1)\,;\\
&b(r+1,s_{1},s_{2},m,j,1)=\sum_{\ell=m}^{f(r,s_{1},s_{2})}b(r,s_{1},s_{2},\ell,j,1) c_{1}(\ell,m)\,,\\
&(0 \le m \le f(r,s_{1},s_{2}),\quad 0 \le j \le f(r,s_{1},s_{2})-m)\,.
\end{align*}

Case $II$, if $r\equiv s_{1}+1, s_{1}+3,\cdots, s_{1}+s_{2}\pmod{2(s_{1}+s_{2})}$ or $r\equiv 2s_{1}+s_{2}+2, 2s_{1}+s_{2}+4,\cdots, 2(s_{1}+s_{2})-1\pmod{2(s_{1}+s_{2})}$, then we have $f(r+1,2,1)=f(r,2,1)$,
\begin{align*}
&b(r+1,s_{1},s_{2},m,j,2)=\sum_{\ell=m}^{f(r,s_{1},s_{2})}b(r,s_{1},s_{2},\ell,j,0) c_{1}(\ell,m)\,,\\
&(0 \le m \le f(r,s_{1},s_{2}),\quad 0 \le j \le f(r,s_{1},s_{2})-m)\,;\\
&b(r+1,s_{1},s_{2},m,j,3)=\sum_{\ell=m-1}^{f(r,s_{1},s_{2})}b(r,s_{1},s_{2},\ell,j,1) c(\ell,m)\,,\\
&(1 \le m \le f(r,s_{1},s_{2})+1,\quad 0 \le j \le f(r,s_{1},s_{2})+1-m)\,;\\
&\quad b(r+1,s_{1},s_{2},0,j,3)\\
&=\sum_{m=0}^{f(r,s_{1},s_{2})}\sum_{\substack{j_{1}+\ell=j\\ 0 \le j_{1} \le f(r,s_{1},s_{2})-m\\ 0 \le \ell \le m}}b(r,s_{1},s_{2},m,j_{1},0) d_{1}(m,\ell)+b(r,s_{1},s_{2},0,j,1)\\
&\quad -\sum_{m=0}^{f(r,s_{1},s_{2})}\sum_{\substack{j_{1}+\ell=j\\ 0 \le j_{1} \le f(r,s_{1},s_{2})-m\\ 1 \le \ell \le m+1 }}b(r,s_{1},s_{2},m,j_{1},1) d(m,\ell)\quad (0 \le j \le f(r,s_{1},s_{2})+1)\,.
\end{align*}

Case $III$, if $r\equiv s_{1}+s_{2}+1, s_{1}+s_{2}+2,\cdots, 2s_{1}+s_{2}\pmod{2(s_{1}+s_{2})}$, then we have $f(r+1,s_{1},s_{2})=f(r,s_{1},s_{2})$,
\begin{align*}
&b(r+1,s_{1},s_{2},m,j,2)=\sum_{\ell=m}^{f(r,s_{1},s_{2})}b(r,s_{1},s_{2},\ell,j,2) c_{1}(\ell,m)\,,\\
&(0 \le m \le f(r,s_{1},s_{2}),\quad 0 \le j \le f(r,s_{1},s_{2})-m)\,;\\
&b(r+1,s_{1},s_{2},m,j,3)=\sum_{\ell=m-1}^{f(r,s_{1},s_{2})}b(r,s_{1},s_{2},\ell,j,3) c(\ell,m)\,,\\
&(1 \le m \le f(r,s_{1},s_{2})+1,\quad 0 \le j \le f(r,s_{1},s_{2})+1-m)\,;\\
&\quad b(r+1,s_{1},s_{2},0,j,3)\\
&=\sum_{m=0}^{f(r,s_{1},s_{2})}\sum_{\substack{j_{1}+\ell=j\\ 0 \le j_{1} \le f(r,s_{1},s_{2})-m\\ 0 \le \ell \le m}}b(r,s_{1},s_{2},m,j_{1},2) d_{1}(m,\ell)+b(r,s_{1},s_{2},0,j,3)\\
&\quad -\sum_{m=0}^{f(r,s_{1},s_{2})}\sum_{\substack{j_{1}+\ell=j\\ 0 \le j_{1} \le f(r,s_{1},s_{2})-m\\ 1 \le \ell \le m+1 }}b(r,s_{1},s_{2},m,j_{1},3) d(m,\ell)\quad (0 \le j \le f(r,s_{1},s_{2})+1)\,.
\end{align*}

Case $IV$, if $r\equiv s_{1}+2, s_{1}+4,\cdots, s_{1}+s_{2}-1\pmod{2(s_{1}+s_{2})}$ or $r\equiv 2s_{1}+s_{2}+1, 2s_{1}+s_{2}+3,\cdots, 2(s_{1}+s_{2})\pmod{2(s_{1}+s_{2})}$, then we have $f(r+1,s_{1},s_{2})=f(r,s_{1},s_{2})+1$,
\begin{align*}
&b(r+1,s_{1},s_{2},m,j,0)=\sum_{\ell=m-1}^{f(r,s_{1},s_{2})}b(r,s_{1},s_{2},\ell,j,2) c(\ell,m)\,,\\
&(1 \le m \le f(r,s_{1},s_{2})+1,\quad 0 \le j \le f(r,s_{1},s_{2})+1-m)\,;\\
&\quad b(r+1,s_{1},s_{2},0,j,0)\\
&=-\sum_{m=0}^{f(r,s_{1},s_{2})}\sum_{\substack{j_{1}+\ell=j\\ 0 \le j_{1} \le f(r,s_{1},s_{2})-m\\ 1 \le \ell \le m+1 }}b(r,s_{1},s_{2},m,j_{1},2) d(m,\ell)+b(r,s_{1},s_{2},0,j,2)\\
&\quad +\sum_{m=0}^{f(r,s_{1},s_{2})}\sum_{\substack{j_{1}+\ell=j\\ 0 \le j_{1} \le f(r,s_{1},s_{2})-m\\ 0 \le \ell \le m }}b(r,s_{1},s_{2},m,j_{1},3) d_{1}(m,\ell)\quad (0 \le j \le f(r,s_{1},s_{2})+1)\,;\\
&b(r+1,s_{1},s_{2},m,j,1)=\sum_{\ell=m}^{f(r,s_{1},s_{2})}b(r,s_{1},s_{2},\ell,j,3) c_{1}(\ell,m)\,,\\
&(0 \le m \le f(r,s_{1},s_{2}),\quad 0 \le j \le f(r,s_{1},s_{2})-m)\,.
\end{align*}

If $s_{2}$=even, then we get that:\\
Case $I^{\prime}$, if $r\equiv 1, 2,\cdots, s_{1}\pmod{s_{1}+s_{2}}$, then we have $f(r+1,s_{1},s_{2})=f(r,s_{1},s_{2})+1$,
\begin{align*}
&b(r+1,s_{1},s_{2},m,j,0)=\sum_{\ell=m-1}^{f(r,s_{1},s_{2})}b(r,s_{1},s_{2},\ell,j,0) c(\ell,m)\,,\\
&(1 \le m \le f(r,s_{1},s_{2})+1,\quad 0 \le j \le f(r,s_{1},s_{2})+1-m)\,;\\
&\quad b(r+1,s_{1},s_{2},0,j,0)\\
&=-\sum_{m=0}^{f(r,s_{1},s_{2})}\sum_{\substack{j_{1}+\ell=j\\ 0 \le j_{1} \le f(r,s_{1},s_{2})-m\\ 1 \le \ell \le m+1 }}b(r,s_{1},s_{2},m,j_{1},0) d(m,\ell)+b(r,s_{1},s_{2},0,j,0)\\
&\quad +\sum_{m=0}^{f(r,s_{1},s_{2})}\sum_{\substack{j_{1}+\ell=j\\ 0 \le j_{1} \le f(r,s_{1},s_{2})-m\\ 0 \le \ell \le m }}b(r,s_{1},s_{2},m,j_{1},1) d_{1}(m,\ell)\quad (0 \le j \le f(r,s_{1},s_{2})+1)\,;\\
&b(r+1,s_{1},s_{2},m,j,1)=\sum_{\ell=m}^{f(r,s_{1},s_{2})}b(r,s_{1},s_{2},\ell,j,1) c_{1}(\ell,m)\,,\\
&(0 \le m \le f(r,s_{1},s_{2}),\quad 0 \le j \le f(r,s_{1},s_{2})-m)\,.
\end{align*}

Case $II^{\prime}$, if $r\equiv s_{1}+1, s_{1}+3,\cdots, s_{1}+s_{2}-1\pmod{s_{1}+s_{2}}$, then we have $f(r+1,s_{1},s_{2})=f(r,s_{1},s_{2})$,
\begin{align*}
&b(r+1,s_{1},s_{2},m,j,2)=\sum_{\ell=m}^{f(r,s_{1},s_{2})}b(r,s_{1},s_{2},\ell,j,0) c_{1}(\ell,m)\,,\\
&(0 \le m \le f(r,s_{1},s_{2}),\quad 0 \le j \le f(r,s_{1},s_{2})-m)\,;\\
&b(r+1,s_{1},s_{2},m,j,3)=\sum_{\ell=m-1}^{f(r,s_{1},s_{2})}b(r,s_{1},s_{2},\ell,j,1) c(\ell,m)\,,\\
&(1 \le m \le f(r,s_{1},s_{2})+1,\quad 0 \le j \le f(r,s_{1},s_{2})+1-m)\,;\\
&\quad b(r+1,s_{1},s_{2},0,j,3)\\
&=\sum_{m=0}^{f(r,s_{1},s_{2})}\sum_{\substack{j_{1}+\ell=j\\ 0 \le j_{1} \le f(r,s_{1},s_{2})-m\\ 0 \le \ell \le m}}b(r,s_{1},s_{2},m,j_{1},0) d_{1}(m,\ell)+b(r,s_{1},s_{2},0,j,1)\\
&\quad -\sum_{m=0}^{f(r,s_{1},s_{2})}\sum_{\substack{j_{1}+\ell=j\\ 0 \le j_{1} \le f(r,s_{1},s_{2})-m\\ 1 \le \ell \le m+1 }}b(r,s_{1},s_{2},m,j_{1},1) d(m,\ell)\quad (0 \le j \le f(r,s_{1},s_{2})+1)\,.
\end{align*}

Case $III^{\prime}$, if $r\equiv s_{1}+2, s_{1}+4,\cdots, s_{1}+s_{2}\pmod{s_{1}+s_{2}}$, then we have $f(r+1,s_{1},s_{2})=f(r,s_{1},s_{2})+1$,
\begin{align*}
&b(r+1,s_{1},s_{2},m,j,0)=\sum_{\ell=m-1}^{f(r,s_{1},s_{2})}b(r,s_{1},s_{2},\ell,j,2) c(\ell,m)\,,\\
&(1 \le m \le f(r,s_{1},s_{2})+1,\quad 0 \le j \le f(r,s_{1},s_{2})+1-m)\,;\\
&\quad b(r+1,s_{1},s_{2},0,j,0)\\
&=-\sum_{m=0}^{f(r,s_{1},s_{2})}\sum_{\substack{j_{1}+\ell=j\\ 0 \le j_{1} \le f(r,s_{1},s_{2})-m\\ 1 \le \ell \le m+1 }}b(r,s_{1},s_{2},m,j_{1},2) d(m,\ell)+b(r,s_{1},s_{2},0,j,2)\\
&\quad +\sum_{m=0}^{f(r,s_{1},s_{2})}\sum_{\substack{j_{1}+\ell=j\\ 0 \le j_{1} \le f(r,s_{1},s_{2})-m\\ 0 \le \ell \le m }}b(r,s_{1},s_{2},m,j_{1},3) d_{1}(m,\ell)\quad (0 \le j \le f(r,s_{1},s_{2})+1)\,;\\
&b(r+1,s_{1},s_{2},m,j,1)=\sum_{\ell=m}^{f(r,s_{1},s_{2})}b(r,s_{1},s_{2},\ell,j,3) c_{1}(\ell,m)\,,\\
&(0 \le m \le f(r,s_{1},s_{2}),\quad 0 \le j \le f(r,s_{1},s_{2})-m)\,.
\end{align*}
The initial value is given by $T(1,s_{1},s_{2},n,t)= 1$.
\end{Lem}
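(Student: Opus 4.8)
The plan is to prove Lemma \ref{mainlemma} by induction on $r$, generalizing the arguments used for Lemmas \ref{lemma1} and \ref{lemma12}. The base case is trivial, since $T(1,s_{1},s_{2},n,t)=1$ has the asserted form with $f(1,s_{1},s_{2})=0$. For the inductive step I would substitute the expansion for $T(r,s_{1},s_{2},k,t)$ into the defining recursion of Definition \ref{deftrnt8}, interchange the two summations, and reduce everything to inner sums of the shape $\sum_{k=t}^{n}k^{m}$ (when the step producing $T(r+1)$ is a plain sum) or $\sum_{k=t}^{n}(-1)^{k-1}k^{m}$ (when it is an alternating sum). These are evaluated by the truncated Faulhaber formulas of Lemma \ref{prop1} and Lemma \ref{prop2}, after which comparison of the coefficients of $t^{j}n^{m}$ yields the stated recurrences for $b(r+1,s_{1},s_{2},m,j,k)$.

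The heart of the matter is the bookkeeping of the four sign types, which I would organize by recording for each monomial the pair of signs it carries in the running variable and in $t$: type $0$ is $(+,+)$, type $1$ is $(-,-)$, type $2$ is $(-,+)$, type $3$ is $(+,-)$. Writing a generic summand of $T(r)$ as $C\,(-1)^{a(k-1)}(-1)^{c(t-1)}t^{j}k^{m}$ and applying $\sum_{k=t}^{n}$ with an outer factor $(-1)^{\varepsilon(k-1)}$, where $\varepsilon=0$ for a plain step and $\varepsilon=1$ for an alternating one, the effective sign of the summand in $k$ becomes $(-1)^{(a+\varepsilon)(k-1)}$. When $a+\varepsilon$ is even the inner sum is resolved by Lemma \ref{prop1}, whose coefficients $c(m,\cdot)$ and $d(m,\cdot)$ raise the $n$-degree by one; when $a+\varepsilon$ is odd it is resolved by Lemma \ref{prop2}, whose coefficients $c_{1}(m,\cdot)$ and $d_{1}(m,\cdot)$ preserve the $n$-degree but split the output into an $(-1)^{n-1}$ part and an $(-1)^{t-1}$ part. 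Multiplying back the surviving factor $(-1)^{c(t-1)}$ then sends each input type to a definite pair of output types. One checks that a plain step fixes the two \emph{phases} $\{0,1\}$ and $\{2,3\}$ while an alternating step interchanges them; collecting these transitions over the residue classes of Definition \ref{deftrnt8} reproduces Cases $I$--$IV$ (for $s_{2}$ odd) and Cases $I'$--$III'$ (for $s_{2}$ even), the interior transformation being governed by $c,d$ on the non-alternating pieces and by $c_{1},d_{1}$ on the alternating pieces.

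Finally I would establish the degree bound, that is, that the double sum truncates at $f(r,s_{1},s_{2})$, together with the degree-separation statements (the generalizations of $b(r,2,1,m,j,1)=0$ for large $m+j$ in Lemma \ref{lemma1}) guaranteeing that the subordinate type in each phase lags the leading type. The top-degree terms grow by one in $n$ exactly when the leading type is processed through Lemma \ref{prop1}, i.e. when its effective sign in $k$ is even, and are degree-preserving otherwise; tracking this through one block of $s_{1}+s_{2}$ steps is precisely what the case split of Definition \ref{deftrnt9} records. In particular, since an alternating step interchanges the two phases, a block of $s_{2}$ alternating steps returns to the original phase when $s_{2}$ is even but flips it when $s_{2}$ is odd, which is exactly why the correct period is $s_{1}+s_{2}$ in the even case and $2(s_{1}+s_{2})$ in the odd case. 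I expect the main obstacle to be this last part: verifying uniformly across all residue classes that the purported top-degree terms cancel (the analogues of the identities $\sum_{m}b(r,2,1,m,f(r,2,1)-m,0)=0$), so that $f(r,s_{1},s_{2})$ is the sharp degree and the $m=0$ recurrences for $b(r+1,s_{1},s_{2},0,j,k)$ close consistently. Replacing the six explicit steps of Lemma \ref{lemma1} by a single period-$2(s_{1}+s_{2})$ (or period-$(s_{1}+s_{2})$) argument is the only genuinely new bookkeeping required.
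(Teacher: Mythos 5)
Your plan is correct and follows essentially the same route as the paper: induction on $r$, substitution of the expansion into Definition \ref{deftrnt8}, evaluation of the inner sums by Lemmas \ref{prop1} and \ref{prop2}, and comparison of coefficients of $t^{j}n^{m}$, exactly as in the paper's proof of Lemma \ref{lemma1}, whose generalization the paper itself invokes (and omits) for Lemma \ref{mainlemma}. Your phase-transition bookkeeping (plain steps fix the pairs $\{0,1\}$ and $\{2,3\}$, alternating steps swap them, whence the period $s_{1}+s_{2}$ for $s_{2}$ even versus $2(s_{1}+s_{2})$ for $s_{2}$ odd) is precisely the structure encoded in the paper's Cases $I$--$IV$ and $I'$--$III'$, and you correctly locate the remaining labor in the top-degree cancellation identities.
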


We have the following theorem.
\begin{theorem}
Let $r,p \in \mathbb N$ and $q$ be a positive real number with $q \geq r+1$, we have,
\begin{align*}
&\quad \sum_{n=1}^\infty \frac{H_n^{(p,r,s_{1},s_{2})}}{n^{q}}\\
&=\sum_{m=0}^{f(r,s_{1},s_{2})}\sum_{j=0}^{f(r,s_{1},s_{2})-m}
\biggl(b(r,s_{1},s_{2},j,m,0)S_{p-m,q-j}^{+,+}+b(r,s_{1},s_{2},j,m,1)S_{p-m,q-j}^{-,-}\\
&\qquad+b(r,s_{1},s_{2},j,m,2)S_{p-m,q-j}^{+,-}+b(r,s_{1},s_{2},j,m,3)S_{p-m,q-j}^{-,+}\biggr).
\end{align*}
Therefore Euler sums of the generalized alternating hyperharmonic numbers $H_n^{(p,r,s_{1},s_{2})}$ can be expressed in terms of linear combinations of classical (alternating) Euler sums.
\end{theorem}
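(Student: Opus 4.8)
The plan is to reduce the computation of $\sum_{n=1}^\infty H_n^{(p,r,s_{1},s_{2})}/n^q$ to the evaluation established in Lemma \ref{mainlemma}, exactly as was done in Theorem \ref{maintheorem} via Lemma \ref{lemma1}. The first observation is that the generalized alternating hyperharmonic number can be written as a double sum over an inner summation weight $T(r,s_{1},s_{2},n,t)$, namely
\begin{align*}
H_n^{(p,r,s_{1},s_{2})}=\sum_{t=1}^{n}\frac{1}{t^{p}}\,T(r,s_{1},s_{2},n,t)\,,
\end{align*}
which follows by induction on $r$ directly from Definition \ref{deftrnt1} and Definition \ref{deftrnt8}: the sign pattern and the choice between a plain sum and an alternating sum at each level of the recursion match identically, while peeling off the innermost factor $1/t^{p}$ isolates the purely power-sum content into $T$. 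Granting this, I would substitute the closed form from Lemma \ref{mainlemma} for $T(r,s_{1},s_{2},n,t)$, obtaining an expression for $H_n^{(p,r,s_{1},s_{2})}$ as a finite linear combination of terms of the shape $t^{j-p} n^{m}$, each multiplied by one of the four sign factors $1$, $(-1)^{n-1+t-1}$, $(-1)^{n-1}$, $(-1)^{t-1}$.

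The second step is to divide by $n^q$, sum over $n$, and interchange the order of summation. Here the inner sum over $t$ from $1$ to $n$ combines with the $1/n^q$ factor to produce precisely the four families of linear (alternating) Euler sums $S_{p-m,q-j}^{\pm,\pm}$ defined in the introduction: a plain inner sum with $n^{-q}$ yields $S^{+,+}$, the factor $(-1)^{t-1}$ inside yields $S^{-,+}$, the factor $(-1)^{n-1}$ yields $S^{+,-}$, and the combined factor $(-1)^{n-1+t-1}$ yields $S^{-,-}$, with the harmonic weight $H_n^{(p-m)}$ or $\overline{H}_n^{(p-m)}$ arising from summing $t^{j-p}$ over $t\le n$ against exponent $p-m$ (and against exponent $q-j$ in the outer index). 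Matching the four sign channels to the four coefficient slots $b(r,s_{1},s_{2},j,m,k)$, $k=0,1,2,3$, gives the stated formula. This is where the index transposition between $(m,j)$ in Lemma \ref{mainlemma} and $(j,m)$ in the theorem statement must be tracked carefully.

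The analytic justification—convergence and the legitimacy of interchanging the two infinite operations—is handled exactly as in the proof of Theorem \ref{maintheorem}: the bound $\lvert H_n^{(p,r,s_{1},s_{2})}\rvert \le H_n^{(p,r)}$ together with the known convergence of $\zeta_{H^{(p,r)}}(q)$ for $q\ge r+1$ (see \cite{Dil}) secures absolute convergence, which in turn licenses the rearrangement of summation order. The main obstacle is not any single analytic estimate but rather the bookkeeping: one must verify that the case division by the residue of $r$ modulo $s_{1}+s_{2}$ (or $2(s_{1}+s_{2})$ when $s_{2}$ is odd) in Lemma \ref{mainlemma} is exactly compatible with the sign structure forced by Definition \ref{deftrnt8}, so that each of the four sign channels lands in the correct Euler-sum family with the correct coefficient. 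Since the special cases $(s_{1},s_{2})=(2,1)$ and $(1,2)$ were carried out explicitly in Theorems \ref{maintheorem} and \ref{maintheorem12}, the general argument is entirely parallel and I would, as the authors do, omit the routine verification once the substitution and interchange are in place.
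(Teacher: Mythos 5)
Your proposal is correct and follows essentially the same route as the paper: the paper omits the proof of this general theorem, pointing to the proof of Theorem \ref{maintheorem}, which consists precisely of your three ingredients — the bound $\lvert H_n^{(p,r,s_{1},s_{2})}\rvert \le H_n^{(p,r)}$ together with convergence of $\zeta_{H^{(p,r)}}(q)$, the representation $H_n^{(p,r,s_{1},s_{2})}=\sum_{t=1}^{n}t^{-p}\,T(r,s_{1},s_{2},n,t)$, and substitution of Lemma \ref{mainlemma} followed by interchange of summation to produce the four Euler-sum channels. Your sign-channel matching ($1\mapsto S^{+,+}$, $(-1)^{n-1+t-1}\mapsto S^{-,-}$, $(-1)^{n-1}\mapsto S^{+,-}$, $(-1)^{t-1}\mapsto S^{-,+}$) and your handling of the $(m,j)\leftrightarrow(j,m)$ transposition agree with the statement, so no gap remains.
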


We now present the following three conjectures on the coefficients.
\begin{Conjecture}
For $r, m, j \in \mathbb N$ with $0 \leq m \leq f(r,s_{1},s_{2})$ and $0 \leq j \leq f(r,s_{1},s_{2})-m$, we conjecture that:
$$
b(r,s_{1},s_{2},m,j,k)=(-1)^{m+j} b(r,s_{1},s_{2},j,m,k) \quad (k=0,1,2,3).
$$
\end{Conjecture}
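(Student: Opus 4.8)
The plan is to recast the conjecture as a single reflection symmetry of the polynomial $T(r,s_{1},s_{2},n,t)$ and then to attack that symmetry by induction on $r$ through the recurrences of Lemma~\ref{mainlemma}. I would work in the commutative $\mathbb Q$-algebra $R=\mathbb Q[n,t][\varepsilon_n,\varepsilon_t]/(\varepsilon_n^2-1,\varepsilon_t^2-1)$, where $\varepsilon_n,\varepsilon_t$ are the formal symbols for $(-1)^{n-1},(-1)^{t-1}$; then each $T(r,s_{1},s_{2},n,t)$ lies in $R$, and with $\chi_0=1$, $\chi_1=\varepsilon_n\varepsilon_t$, $\chi_2=\varepsilon_n$, $\chi_3=\varepsilon_t$ the coefficient of $\chi_k\,n^m t^j$ is exactly $b(r,s_{1},s_{2},m,j,k)$. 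Let $\Psi$ be the $\mathbb Q$-algebra involution of $R$ that fixes $\varepsilon_n$ and $\varepsilon_t$ and sends $n\mapsto -t$, $t\mapsto -n$. Expanding $\Psi$ termwise shows that the coefficient of $\chi_k\,n^m t^j$ in $\Psi\big(T(r,s_{1},s_{2},n,t)\big)$ is $(-1)^{m+j}b(r,s_{1},s_{2},j,m,k)$, whence
\begin{align*}
\Psi\big(T(r,s_{1},s_{2},n,t)\big)=T(r,s_{1},s_{2},n,t)\iff b(r,s_{1},s_{2},m,j,k)=(-1)^{m+j}b(r,s_{1},s_{2},j,m,k).
\end{align*}
Thus the conjecture is precisely the statement that every $T(r,s_{1},s_{2},n,t)$ is fixed by $\Psi$.

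Second, I would pin down the reflection behaviour of the two truncated Faulhaber building blocks $F_m(n,t):=\sum_{\ell=t}^n\ell^m$ and $G_m(n,t):=\sum_{\ell=t}^n(-1)^{\ell-1}\ell^m$. From the Bernoulli reflection formula $B_{k}(1-x)=(-1)^kB_k(x)$ together with \eqref{ber1} one obtains the eigen-relations $\Psi(F_m)=(-1)^mF_m$ and $\Psi(G_m)=(-1)^m\,\tau(G_m)$, where $\tau$ is the involution of $R$ interchanging $\varepsilon_n\leftrightarrow\varepsilon_t$ only. Comparing coefficients, these two relations are exactly the identities $c(m,j)=(-1)^{m+1-j}d(m,j)$ and $c_1(m,j)=(-1)^{m-j}d_1(m,j)$ left open in the Remark following Lemma~\ref{prop2}; I would first prove those (they follow cleanly from \eqref{defbernou} and the reflection formula) and record them as a lemma, since they are what makes $\Psi$ interact intelligibly with summation.

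Third, I would run the induction. The base case $T(1,s_{1},s_{2},n,t)=1$ is $\Psi$-fixed. Each step $T(r,\cdots)\to T(r+1,\cdots)$ applies one of the operators $\mathcal L_+=\sum_{k=t}^n(\cdot)$ or $\mathcal L_-=\sum_{k=t}^n(-1)^{k-1}(\cdot)$, and the recurrences of Lemma~\ref{mainlemma} are precisely the coefficient form of $\mathcal L_\pm$ written through $c,d,c_1,d_1$. Using the eigen-relations of the second step one can compute how $\Psi$ transports each character-block and match it, character by character, against its $m\leftrightarrow j$ reflection; the factor $\tau$ in $\Psi(G_m)$ is exactly what forces characters $2$ and $3$ to be interchanged by the honest reflection $\Phi=\tau\circ\Psi$ (that is, $n\mapsto-t,\ t\mapsto-n,\ \varepsilon_n\leftrightarrow\varepsilon_t$) and recombined in Cases~II--IV, so this interchange must be shown to cancel over the residue pattern modulo $s_{1}+s_{2}$ (resp.\ $2(s_{1}+s_{2})$) recorded in Definition~\ref{deftrnt9}.

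The hard part is the inductive step, and the essential difficulty is that a \emph{single} summation does \emph{not} preserve $\Psi$-invariance: because $\mathcal L_\pm$ treats the lower limit $t$ and the upper limit $n$ asymmetrically, one computes for example $\mathcal L_+[nt]=\tfrac12(n^2t+nt-t^3+t^2)$, which is not $\Psi$-fixed although $nt$ is. Hence $\Psi$-invariance by itself is too weak an inductive hypothesis, and the symmetry of $T(r+1,\cdots)$ genuinely uses more than the symmetry of $T(r,\cdots)$ (this is also why no naive reversal $k\mapsto n+t-k$ of the nested sum works, since for $(s_1,s_2)=(2,1)$ the alternating/non-alternating pattern $N,N,A,N,N,A,\dots$ is never palindromic). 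I therefore expect the resolution to require strengthening the hypothesis so as to control simultaneously the boundary coefficients $b(r,s_{1},s_{2},0,j,k)$ produced by the $d$- and $d_1$-sums, where the asymmetry is concentrated, together with auxiliary vanishing sums of the type $\sum_m b(r,\cdots,f-m,0)=0$ that already surface in Lemma~\ref{lemma1}; propagating these refined relations through the four cases of Lemma~\ref{mainlemma} and showing that they force the per-character reflection identity to reproduce itself at level $r+1$ is the crux on which the whole argument turns.
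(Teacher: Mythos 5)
There is nothing in the paper to compare your argument against: the statement you are addressing is presented there as a \emph{Conjecture}, with no proof offered. Your proposal, as written, does not close that gap, and you say so yourself. Its positive content is nonetheless sound: recasting the conjecture as invariance of $T(r,s_{1},s_{2},n,t)$ under the involution $\Psi\colon n\mapsto -t,\ t\mapsto -n$ (fixing the sign symbols $\varepsilon_n,\varepsilon_t$) is a correct and clean equivalence, and your identification of the eigen-relations $\Psi(F_m)=(-1)^mF_m$, $\Psi(G_m)=(-1)^m\tau(G_m)$ with the two coefficient identities $c(m,j)=(-1)^{m+1-j}d(m,j)$ and $c_{1}(m,j)=(-1)^{m-j}d_{1}(m,j)$ is exactly right --- note, however, that those identities are themselves only conjectured in the Remark following Lemma \ref{prop2} (``we leave it as a problem to the reader''), so your plan rests on a second unproven statement that you assert follows ``cleanly'' but do not derive.

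The genuine gap is the one you name in your final paragraph, and it is fatal to the argument as structured: $\Psi$-invariance is \emph{not} preserved by a single application of $\mathcal L_{\pm}$ (your computation of $\mathcal L_{+}[nt]$ is a correct counterexample), so induction on $r$ with $\Psi$-invariance as the inductive hypothesis simply does not go through. Everything therefore hinges on formulating a strictly stronger invariant --- some package of boundary conditions on the coefficients $b(r,s_{1},s_{2},0,j,k)$ and vanishing sums of the kind appearing in Lemma \ref{lemma1} --- and proving that it reproduces itself across all the residue-class cases of Lemma \ref{mainlemma}. You do not propose a candidate for this invariant, let alone verify its propagation; you only predict that one must exist. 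Diagnosing the crux accurately is valuable, but it is the entire mathematical content of the problem, and it is precisely the part left undone. As it stands, your text is a well-organized reduction of one open conjecture to another (strengthened) open statement, not a proof, which is consistent with the status the paper itself assigns to the claim.
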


\begin{Conjecture}
For $r, m \in \mathbb N$, we conjecture that:
$$
\sum_{j=0}^{m}\sum_{k=0}^{3}b(r,s_{1},s_{2},m-j,j,k)=\delta_{m 0} \quad \bigl(0 \leq m \leq f(r,s_{1},s_{2})\bigr),
$$
where $\delta_{m n}$ is the Kronecker delta, that is, $\delta_{m m}=1$, $\delta_{m n}=0$ for $m \neq n$.
\end{Conjecture}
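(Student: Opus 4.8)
The plan is to recover the stated identity by evaluating the two–variable polynomial $T(r,s_{1},s_{2},n,t)$ on the diagonal $n=t$ in two independent ways and matching the results. The point is that the combination $\sum_{j=0}^{m}\sum_{k=0}^{3}b(r,s_{1},s_{2},m-j,j,k)$ is exactly the total coefficient of the monomials of joint degree $m$ in $(n,t)$ in the expansion of Lemma \ref{mainlemma}, once all four sign–patterns are summed; the diagonal substitution is engineered so that precisely this combination survives. Throughout I abbreviate $\beta(m,j,k):=b(r,s_{1},s_{2},m,j,k)$ for fixed $r,s_{1},s_{2}$.

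First I would compute $T(r,s_{1},s_{2},t,t)$ directly from Definition \ref{deftrnt8}. Setting $n=t$ collapses each summation $\sum_{k=t}^{n}$ to its single term $k=t$: a non-alternating step reproduces the previous value, while an alternating step multiplies it by $(-1)^{t-1}$. Starting from $T(1,s_{1},s_{2},t,t)=1$, an immediate induction on $r$ gives
\[
T(r,s_{1},s_{2},t,t)=(-1)^{(t-1)A},
\]
where $A$ is the number of alternating summation steps used to build level $r$ from level $1$, i.e.\ the number of $\ell\in\{1,\dots,r-1\}$ with $\ell\equiv s_{1}+1,\dots,s_{1}+s_{2}\pmod{s_{1}+s_{2}}$.

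Second I would substitute $n=t$ into the expansion of Lemma \ref{mainlemma}. Since $(-1)^{(n-1)+(t-1)}=1$ and $(-1)^{n-1}=(-1)^{t-1}$ on the diagonal, grouping the monomials by joint degree $D=m+j$ yields
\[
T(r,s_{1},s_{2},t,t)=\sum_{D=0}^{f(r,s_{1},s_{2})}t^{D}\bigl(P_{D}+(-1)^{t-1}Q_{D}\bigr),
\]
where $P_{D}=\sum_{m+j=D}\bigl(\beta(m,j,0)+\beta(m,j,1)\bigr)$ and $Q_{D}=\sum_{m+j=D}\bigl(\beta(m,j,2)+\beta(m,j,3)\bigr)$, so that $P_{D}+Q_{D}$ is exactly the sum in the identity (with $D$ playing the role of $m$ and the $n$-degree written as $m-j$). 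I then restrict to odd $t$, where both $(-1)^{t-1}=1$ and $(-1)^{(t-1)A}=1$; the two evaluations force $\sum_{D}(P_{D}+Q_{D})t^{D}=1$ for all odd $t\in\mathbb N$. The left-hand side is a polynomial in $t$ of degree at most $f(r,s_{1},s_{2})$ that equals the constant $1$ at infinitely many points, hence identically, and comparing coefficients gives $P_{D}+Q_{D}=\delta_{D0}$, which is the claim over the full range $0\le D\le f(r,s_{1},s_{2})$.

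The step I expect to require the most care is the sign bookkeeping that guarantees the diagonal substitution isolates $\sum_{k}\beta(\cdot,\cdot,k)$ uniformly in $r$. A priori the active sign-pattern changes with $r$ (only $\{b_{0},b_{1}\}$ or only $\{b_{2},b_{3}\}$ is nonzero, according to the parity of $A$, as the recurrences of Lemma \ref{mainlemma} show: each alternating step interchanges the two pairs while each non-alternating step preserves them), so a naive approach would split into many cases on $r\bmod(s_{1}+s_{2})$. The key simplification is that evaluating only on the \emph{odd}-$t$ diagonal kills the $(-1)^{t-1}$ factor and collapses $(-1)^{(t-1)A}$ to $1$ regardless of $A$, so a single polynomial identity $\sum_{D}(P_{D}+Q_{D})t^{D}\equiv 1$ captures every residue class at once; the companion even-$t$ relation $P_{D}-Q_{D}=(-1)^{A}\delta_{D0}$ is then a consistency check but is not needed. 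The only remaining routine verifications are that the diagonal recursion collapses as claimed and that the coefficient comparison is legitimate, both of which follow from the finiteness of the expansion in Lemma \ref{mainlemma}.
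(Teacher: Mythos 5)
This statement is one of the paper's open problems: it appears as a conjecture, and the paper offers no proof of it at all, so there is no ``paper proof'' to compare against. Your argument, as far as I can verify, actually settles the conjecture, and each step checks out. (i) At $n=t$ every sum $\sum_{k=t}^{n}$ in Definition \ref{deftrnt8} collapses to its single term $k=t$, so induction on $r$ gives $T(r,s_{1},s_{2},t,t)=(-1)^{(t-1)A}$, where $A$ counts the alternating steps $\ell\in\{1,\dots,r-1\}$ with $\ell\equiv s_{1}+1,\dots,s_{1}+s_{2}\pmod{s_{1}+s_{2}}$; in particular the value is $1$ for odd $t$. (ii) Substituting $n=t$ into the expansion of Lemma \ref{mainlemma} and using $(-1)^{(n-1)+(t-1)}=1$ and $(-1)^{n-1}=(-1)^{t-1}$ correctly produces $\sum_{D}t^{D}\bigl(P_{D}+(-1)^{t-1}Q_{D}\bigr)$, and since in the lemma $b(r,s_{1},s_{2},m,j,k)$ is the coefficient of $t^{j}n^{m}$, the quantity $P_{D}+Q_{D}$ is precisely $\sum_{j=0}^{D}\sum_{k=0}^{3}b(r,s_{1},s_{2},D-j,j,k)$, i.e.\ the conjectured sum with $D$ in the role of $m$. (iii) Restricting to odd $t$ gives a polynomial of degree at most $f(r,s_{1},s_{2})$ equal to $1$ at infinitely many points, hence identically, and comparing coefficients yields $P_{D}+Q_{D}=\delta_{D0}$ over the full range $0\le D\le f(r,s_{1},s_{2})$. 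The restriction to odd $t$ is exactly the right device: it makes the argument uniform in $r$ and avoids any case analysis on $r$ modulo $s_{1}+s_{2}$ or on the parity of $A$; your even-$t$ companion relation $P_{D}-Q_{D}=(-1)^{A}\delta_{D0}$ is a correct bonus consistency check. The one caveat you should state explicitly if you write this up: the proof is conditional on Lemma \ref{mainlemma} itself, whose proof the paper omits as ``similar'' to that of Lemma \ref{lemma1}; that dependency is legitimate, since the conjecture concerns the very coefficients that lemma introduces, but your result inherits whatever verification Lemma \ref{mainlemma} still requires for general $(s_{1},s_{2})$.
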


\begin{Conjecture}
For $r, m \in \mathbb N$ with $0\leq m \leq f(r,s_{1},s_{2})$, we conjecture that:
for $0 \leq m \leq f(r,s_{1},s_{2})$ and $0 \leq j \leq f(r,s_{1},s_{2})-m$, we have
\begin{align*}
\mathrm{sgn}(\sum_{k=0}^{3}b(r,s_{1},s_{2},m,j,k))=(-1)^{j}\,,
\end{align*}
where $\mathrm{sgn}(x)$ is the signum function defined by
\begin{equation*}
\mathrm{sgn}(x)=
\begin{cases}
1& \text{$x>0$},\\
0& \text{$x=0$},\\
-1& \text{$x<0$}.
\end{cases}
\end{equation*}
In particular for $0 \leq m \leq f(r,s_{1},s_{2})$ and $0 \leq j \leq f(r,s_{1},s_{2})-m$, we have
\begin{align*}
\sum_{k=0}^{3} b(r,s_{1},s_{2},m,j,k)\neq 0\,.
\end{align*}
\end{Conjecture}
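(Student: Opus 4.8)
The plan is to reduce the sign statement to a positivity statement about a single univariate polynomial, and then to prove that positivity by passing to a well-chosen basis.

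First I would observe that $\sum_{k=0}^{3}b(r,s_{1},s_{2},m,j,k)$ is exactly the coefficient of $t^{j}n^{m}$ in the polynomial $G_{r}(n,t)$ obtained from $T(r,s_{1},s_{2},n,t)$ by replacing all four sign factors $(-1)^{n-1+t-1},(-1)^{n-1},(-1)^{t-1}$ by $+1$; concretely $G_{r}(n,t)=T(r,s_{1},s_{2},n,t)$ restricted to $n\equiv t\equiv 1\pmod 2$. Thus the conjecture is equivalent to the assertion that $G_{r}(n,-t)$ has strictly positive coefficients on its support. The structural key is that $G_{r}$ depends only on $v:=n-t$: writing, for $t$ odd, $T(r,s_{1},s_{2},k,t)=\rho_{r}(k-t)+(-1)^{k-1}\chi_{r}(k-t)$, where $\rho_{r}$ collects the $b(\cdot,0)+b(\cdot,3)$ part and $\chi_{r}$ the $b(\cdot,1)+b(\cdot,2)$ part, an induction on $r$ based on Definition \ref{deftrnt8} shows that $\rho_{r},\chi_{r}$ are genuine functions of the single variable $k-t$ and that $G_{r}(v)=\rho_{r}(v)+\chi_{r}(v)$. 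Indeed every defining step is either an ordinary partial sum $\sum_{k=t}^{n}$ or an alternating one $\sum_{k=t}^{n}(-1)^{k-1}$, and after the substitution $i=k-t$ (with $t$ odd, so that $(-1)^{k-1}=(-1)^{i}$) both operations produce functions of $v=n-t$ alone.

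The crux is the positivity, and here the naive monomial basis fails: $\sum_{i=0}^{v}i^{4}$ already has a negative coefficient, so partial summation does not preserve coefficient positivity. The remedy I would use is the rising-factorial basis $E_{d}(v):=\binom{v+d}{d}$, every member of which has strictly positive coefficients in $v$. In this basis the two operations are manifestly positivity-preserving:
\[
\sum_{i=0}^{v}E_{d}(i)=E_{d+1}(v),\qquad
\sum_{i=0}^{v}(-1)^{i}E_{d}(i)=\frac{1}{2^{d+1}}+(-1)^{v}\sum_{d'=0}^{d}\frac{1}{2^{\,d-d'+1}}E_{d'}(v),
\]
the first being the hockey-stick identity and the second following from solving $V(v)+V(v-1)=E_{d}(v)$ in the $E$-basis, together with the fact that the sign-free part of such an alternating sum is the constant $2^{-d-1}$. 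Feeding these into the recursion of the previous paragraph (ordinary sum building $\rho_{r+1}$, alternating sum splitting between $\rho_{r+1}$ and $\chi_{r+1}$, and symmetrically on an alternating step) shows that the property ``$\rho_{r}$ and $\chi_{r}$ are nonnegative combinations of the $E_{d}$'' is preserved from $r$ to $r+1$; it holds at $r=1$ since $\rho_{1}=E_{0}$, $\chi_{1}=0$.

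Finally I would conclude as follows. Since $\phi_{r}:=\rho_{r}+\chi_{r}$ is a nonnegative combination of the $E_{d}$ and each $E_{d}$ has positive monomial coefficients, every coefficient of $\phi_{r}(v)$ is $\ge 0$, which already yields $(-1)^{j}\sum_{k}b(r,s_{1},s_{2},m,j,k)\ge 0$. For strictness I would run a parallel degree count showing $\deg\phi_{r}=f(r,s_{1},s_{2})$ with positive leading term; because $E_{f}$ alone contributes a strictly positive amount to every degree $0,1,\dots,f$, all monomial coefficients of $\phi_{r}$ in that range are in fact strictly positive. Extracting the coefficient of $t^{j}n^{m}$ from $\phi_{r}(n-t)=\sum_{d}\hat{\phi}_{r,d}(n-t)^{d}$ then gives $\sum_{k=0}^{3}b(r,s_{1},s_{2},m,j,k)=(-1)^{j}\binom{m+j}{m}\hat{\phi}_{r,m+j}$, whose sign is exactly $(-1)^{j}$ and which is nonzero for $0\le m$, $0\le j$, $m+j\le f(r,s_{1},s_{2})$. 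The main obstacle is precisely the choice of basis in the crux paragraph: monomial positivity is not inductively stable, and the whole argument hinges on recognizing that the rising-factorial basis is simultaneously compatible with both the ordinary and the alternating partial-sum operators.
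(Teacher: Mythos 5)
The paper does not prove this statement at all: it is posed as an open conjecture, so there is no proof of record to compare yours against, and your argument should be judged as a proposed resolution. Having checked it, I believe your outline is correct. The reduction is sound: on pairs with $n\equiv t\equiv 1\pmod 2$ the four-sign representation of Lemma \ref{mainlemma} collapses to $\sum_{m,j}\bigl(\sum_{k=0}^{3}b(r,s_{1},s_{2},m,j,k)\bigr)t^{j}n^{m}$, polynomial interpolation on that lattice is unique, and your induction from Definition \ref{deftrnt8} (for odd $t$, writing $T=\rho_{r}(n-t)+(-1)^{n-1}\chi_{r}(n-t)$, using $(-1)^{k-1}=(-1)^{k-t}$) identifies this polynomial with $\phi_{r}(n-t)$, $\phi_{r}=\rho_{r}+\chi_{r}$, giving $\sum_{k}b(r,s_{1},s_{2},m,j,k)=(-1)^{j}\binom{m+j}{m}\hat\phi_{r,m+j}$; so everything reduces to $\hat\phi_{r,d}>0$ for $0\le d\le f(r,s_{1},s_{2})$. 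Both basis identities check out (the alternating one follows from $V(v)+V(v-1)=E_{d}(v)$ with $V=\sum_{d'}2^{-(d-d'+1)}E_{d'}$ and the value at $v=0$), your observation that the monomial basis fails is right ($\sum_{i\le v}i^{4}$ has the term $-v/30$), and the rising-factorial basis is indeed the decisive idea. Two compressed steps need full detail in a write-up, though I verified both go through. First, the degree count: with $(a_{r},c_{r})=(\deg\rho_{r},\deg\chi_{r})$, an ordinary step gives $(a+1,c)$ and an alternating step $(\max(c+1,0),a)$; one must check $\max(a_{r},c_{r})$ reproduces Definition \ref{deftrnt9} exactly, and the bookkeeping is genuinely fiddly for odd $s_{2}$ --- after the first alternating block one has $c_{r}>a_{r}$, the following ordinary block leaves the maximum unchanged (precisely why $f$ stalls on the residues $s_{1}+s_{2}+1,\dots,2s_{1}+s_{2}$), the two degrees re-equalize, and the pattern recurs with period $2(s_{1}+s_{2})$; for even $s_{2}$ the gap $a-c$ stays large and the period is $s_{1}+s_{2}$. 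Second, strictness: you need $\rho_{r}\neq 0$ for every $r$ (guaranteed by the constant terms $2^{-d-1}$), and a strictly positive coefficient on the top basis element $E_{f}$, which holds because the hockey-stick step preserves, and the alternating step halves, the leading basis coefficient, while nonnegative combinations cannot cancel; then the strictly positive monomial coefficients of $E_{f}$ alone force $\hat\phi_{r,d}>0$ in every degree $d\le f$, yielding both the sign $(-1)^{j}$ and the nonvanishing claimed in the conjecture.
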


\section{Nonlinear Euler sums of generalized alternating hyperharmonic numbers $H_n^{(p,r,s_{1},s_{2})}$}

In this section, we investigate nonlinear Euler sums of the generalized alternating hyperharmonic numbers $H_n^{(p,r,s_{1},s_{2})}$.

\begin{theorem}
Let $k, r_{1}, \cdots, r_{k},p_{1}, p_{k} \in \mathbb N$, $s_{1,1},\cdots,s_{k,1}, s_{1,2},\cdots s_{k,2}\in \mathbb N \cup \{0\}, 1\leq i \leq k, s_{i,1}+s_{i,2} \geq 1$ and $q$ be a positive real number large enough, we can obtain that
nonlinear Euler sums of the generalized alternating hyperharmonic numbers $H_n^{(p,r,s_{1},s_{2})}$ can be expressed in terms of linear combinations of classical nonlinear (alternating) Euler sums.
\end{theorem}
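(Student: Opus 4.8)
The plan is to reduce a general nonlinear Euler sum of the shape
$$
\sum_{n=1}^\infty \frac{1}{n^q}\prod_{i=1}^{k} H_n^{(p_i,r_i,s_{i,1},s_{i,2})}
$$
to classical nonlinear (alternating) Euler sums, imitating the argument behind Theorem \ref{maintheorem} factor by factor. First I would use the representation underlying Lemma \ref{mainlemma}, namely
$$
H_n^{(p_i,r_i,s_{i,1},s_{i,2})}=\sum_{t_i=1}^{n}\frac{1}{t_i^{p_i}}\,T(r_i,s_{i,1},s_{i,2},n,t_i),
$$
and then substitute the explicit polynomial expansion of each $T(r_i,s_{i,1},s_{i,2},n,t_i)$ supplied there. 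Multiplying the $k$ representations together and interchanging the product with the independent summations over $t_1,\dots,t_k$, the $k$-fold sum factors as a product $\prod_i\sum_{t_i=1}^n(\cdots)$.

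The key combinatorial simplification is the behaviour of the sign factors. Each monomial coming from $T(r_i,s_{i,1},s_{i,2},n,t_i)$ carries one of the four patterns $1$, $(-1)^{n-1+t_i-1}$, $(-1)^{n-1}$, $(-1)^{t_i-1}$, each of which factors as an $n$-sign in $\{1,(-1)^{n-1}\}$ times a $t_i$-sign in $\{1,(-1)^{t_i-1}\}$. When the $k$ factors are multiplied, the $t_i$-signs stay attached to their own index, while the product of the $n$-signs collapses, purely by the parity of the number of factors contributing $(-1)^{n-1}$, to either $1$ or $(-1)^{n-1}$. Hence after collecting monomials the product equals a finite linear combination of terms of the form
$$
\varepsilon\, n^{M}\prod_{i=1}^{k}\Bigl(\sum_{t_i=1}^{n}\frac{w_i}{t_i^{\,p_i-j_i}}\Bigr),
\qquad \varepsilon\in\{1,(-1)^{n-1}\},\ M=\textstyle\sum_i m_i,
$$
where $w_i\in\{1,(-1)^{t_i-1}\}$, so that each inner sum is exactly $H_n^{(p_i-j_i)}$ or $\overline{H}_n^{(p_i-j_i)}$ (with the power-sum convention of the Introduction when $p_i-j_i\le 0$), and the overall coefficients are products of the quantities $b(r_i,s_{i,1},s_{i,2},m_i,j_i,\cdot)$.

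Dividing by $n^q$ and summing over $n$, every resulting term becomes
$$
\sum_{n=1}^\infty \frac{\varepsilon}{n^{q-M}}\prod_{i=1}^{k} G_n^{(p_i-j_i)}
$$
with each $G\in\{H,\overline H\}$ and $\varepsilon\in\{1,(-1)^{n-1}\}$; this is precisely a classical nonlinear (alternating) Euler sum. Convergence of the original series, and of each term after rearrangement, is guaranteed by the crude bound $\lvert H_n^{(p_i,r_i,s_{i,1},s_{i,2})}\rvert\le H_n^{(p_i,r_i)}$ together with the convergence of $\sum_n n^{-q}\prod_i H_n^{(p_i,r_i)}$ once $q$ is large enough (it suffices that $q$ exceed a threshold determined by the $r_i$ and $k$, e.g.\ $q>\sum_i r_i$), which is the meaning of ``$q$ large enough''; this absolute convergence also justifies the interchange of summation order used above.

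I expect the main obstacle to be organisational rather than conceptual: one must keep track of which subset of the $k$ factors contributes an $n$-sign (fixing the global $\varepsilon$) while simultaneously recording the per-index $t_i$-signs $w_i$ and exponent shifts $j_i$, so that the final linear combination and its coefficients are written down unambiguously. The parity collapse of the $n$-signs and the factorisation of the $t$-sums are the two facts that make the reduction go through, and both are elementary once the expansion of Lemma \ref{mainlemma} is in hand.
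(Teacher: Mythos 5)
Your proposal is correct and follows essentially the same route as the paper: the paper likewise expands each factor $H_n^{(p_i,r_i,s_{i,1},s_{i,2})}$ via the polynomial representation of $T(r_i,s_{i,1},s_{i,2},n,t_i)$ (already summed over $t_i$, so each factor becomes a finite combination of terms $n^{j_i}H_n^{(p_i-m_i)}$, $(-1)^{n-1}n^{j_i}\overline{H}_n^{(p_i-m_i)}$, etc.), multiplies out, interchanges the finite and infinite summations, and observes that each resulting term is a classical nonlinear (alternating) Euler sum. Your explicit treatment of the parity collapse of the $(-1)^{n-1}$ factors and of the convergence threshold only makes explicit what the paper leaves implicit.
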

\begin{proof}
Using Lemma \ref{lemma1}, we can write
\begin{align*}
&\quad \sum_{n=1}^\infty \frac{H_n^{(p_{1},r_{1},s_{1,1},s_{1,2})}\cdots H_n^{(p_{k},r_{k},s_{k,1},s_{k,2})}}{n^{q}}\\
&=\sum_{n=1}^\infty \frac{1}{n^{q}}\prod_{i=1}^{k}\sum_{m_{i}=0}^{f(r_{i},s_{i,1},s_{i,2})}
\sum_{j_{i}=0}^{f(r_{i},s_{i,1},s_{i,2})-m_{i}}\biggl(b(r_{i},s_{i,1},s_{i,2},j_{i},m_{i},0)n^{j_{i}} H_{n}^{(p_{i}-m_{i})}\\
&\quad+b(r_{i},s_{i,1},s_{i,2},j_{i},m_{i},1)(-1)^{n-1}n^{j_{i}} \bar{H}_{n}^{(p_{i}-m_{i})}+b(r_{i},s_{i,1},s_{i,2},j_{i},m_{i},2)\\
&\quad\times(-1)^{n-1}n^{j_{i}} H_{n}^{(p_{i}-m_{i})}+b(r_{i},s_{i,1},s_{i,2},j_{i},m_{i},3)n^{j_{i}}\bar{H}_{n}^{(p_{i}-m_{i})}\biggr) \,\\
&=\sum_{m_{1}=0}^{f(r,s_{1},s_{2})}\sum_{j_{1}=0}^{f(r,s_{1},s_{2})-m_{1}}\cdots \sum_{m_{k}=0}^{f(r,s_{1},s_{2})}\sum_{j_{k}=0}^{f(r,s_{1},s_{2})-m_{k}}\sum_{n=1}^\infty \frac{1}{n^{q}}\\
&\quad \times \prod_{i=1}^{k}\biggl(b(r_{i},s_{i,1},s_{i,2},j_{i},m_{i},0)n^{j_{i}} H_{n}^{(p_{i}-m_{i})}+b(r_{i},s_{i,1},s_{i,2},j_{i},m_{i},1)(-1)^{n-1}\\
&\quad \times n^{j_{i}}\bar{H}_{n}^{(p_{i}-m_{i})}+b(r_{i},s_{i,1},s_{i,2},j_{i},m_{i},2)(-1)^{n-1}n^{j_{i}} H_{n}^{(p_{i}-m_{i})}\\
&\quad +b(r_{i},s_{i,1},s_{i,2},j_{i},m_{i},3)n^{j_{i}}\bar{H}_{n}^{(p_{i}-m_{i})}\biggr).
\end{align*}
Since
\begin{align*}
&\sum_{n=1}^\infty \frac{1}{n^{q}}\prod_{i=1}^{k}\biggl(b(r_{i},s_{i,1},s_{i,2},j_{i},m_{i},0)n^{j_{i}} H_{n}^{(p_{i}-m_{i})}+b(r_{i},s_{i,1},s_{i,2},j_{i},m_{i},1)(-1)^{n-1}\\
&\quad \times n^{j_{i}}\bar{H}_{n}^{(p_{i}-m_{i})}+b(r_{i},s_{i,1},s_{i,2},j_{i},m_{i},2)(-1)^{n-1}n^{j_{i}} H_{n}^{(p_{i}-m_{i})}\\
&\quad +b(r_{i},s_{i,1},s_{i,2},j_{i},m_{i},3)n^{j_{i}}\bar{H}_{n}^{(p_{i}-m_{i})}\biggr)
\end{align*}
is reduced to linear combinations of classical nonlinear (alternating) Euler sums, we obtain the desired result.
\end{proof}

Recently, the author \cite{Lirusensen,LirusensenJMAA} considered generalized (alternating) hyperharmonic number sums with reciprocal binomial coefficients. Combining Lemma \ref{mainlemma} and lemmata proved in the author's previous works \cite{Lirusensen,LirusensenJMAA}, we have the following result.
\begin{Remark}
Generalized alternating hyperharmonic number sums
\begin{align*}
&\sum_{n=1}^\infty\frac{H_n^{(p,r,s_{1},s_{2})}}{n^{m}\binom{n+k}{k}}\,,\quad
\sum_{n=1}^\infty\frac{(-1)^{n+1}H_n^{(p,r,s_{1},s_{2})}}{n^{m}\binom{n+k}{k}}\,,\quad
\sum_{n=1}^\infty\frac{H_n^{(p_{1},r_{1},s_{1},t_{1})}H_n^{(p_{2},r_{2},s_{2},t_{2})}}{n^{m}\binom{n+k}{k}}\,,\quad\\
&\sum_{n=1}^\infty\frac{(-1)^{n+1}H_n^{(p_{1},r_{1},s_{1},t_{1})}H_n^{(p_{2},r_{2},s_{2},t_{2})}}{n^{m}\binom{n+k}{k}}
\end{align*}
can be expressed in terms of classical (alternating) Euler sums, zeta values and generalized (alternating) harmonic numbers.
\end{Remark}

\section{Some open problems}

We now provide some open problems for further considerations.
\begin{Problem}
It is well known (see {\cite{Benjamin}}) that the (hyper-)harmonic numbers are closely related to ($r$-)Stirling numbers:
\begin{align*}
H_{n}=\frac{\stf{n+1}{2}}{n!}\,,
h_{n}^{r}=\frac{\stf{n+r}{r+1}_r}{n!}\,,
\end{align*}
where where $\stf{n}{k}$ and $\stf{n}{k}_{r}$ denote the (unsigned) Stirling number of the first kind and $r$-Stirling number, respectively. So we may ask such a question: Can we find the corresponding Stirling numbers for the generalized alternating hyperharmonic numbers $H_n^{(p,r,s_{1},s_{2})}$?
\end{Problem}

\begin{Problem}
G\"oral and Sertba\c s (see {\cite{Goral}}) studied divisibility properties of hyperharmonic numbers and extend Wolstenholme's theorem to them. Moreover, they proved that all hyperharmonic numbers in their reduced fractional form are odd and provided $p$-adic value lower bounds for certain hyperharmonic numbers. Can we extend Wolstenholme's theorem to the generalized alternating hyperharmonic numbers $H_n^{(p,r,s_{1},s_{2})}$? Is it possible to obtain similar divisibility properties for the generalized alternating hyperharmonic numbers $H_n^{(p,r,s_{1},s_{2})}$?
\end{Problem}

\begin{Problem}
Kamano \cite{Kamano} proved the analytic continuation of  $\sum_{n=1}^\infty h_n^{(r)}/{n^{s}}$. Is it possible to obtain similar analytic continuation for Euler sums of the generalized alternating hyperharmonic numbers $H_n^{(p,r,s_{1},s_{2})}$? Similar with the infamous Riemann Hypothesis, can we study the nontrivial zeros of this complex function $\sum_{n=1}^\infty H_n^{(p,r,s_{1},s_{2})}/{n^{s}}$?
\end{Problem}

\begin{Problem}
\" Om\" ur and Koparal \cite{omur} defined two $n\times n$ matrices $A_n$ and $B_n$ with $a_{i,j}=H_i^{(j,r)}$ and $b_{i,j}=H_i^{(p,j)}$, respectively, and gave some interesting factorizations and determinant properties of the matrices $A_n$ and $B_n$. Is it possible to obtain similar determinant properties for the generalized alternating hyperharmonic numbers $H_n^{(p,r,s_{1},s_{2})}$?
\end{Problem}

\section{Data availability}

The datasets generated during and/or analysed during the current study are available from the corresponding author on reasonable request.

\end{document}